\documentclass[final,leqno,letterpaper]{etna}

\usepackage{mathtools}
\usepackage{amssymb}
\usepackage{amsmath}
\usepackage{amsfonts}

\usepackage{algorithm}

\usepackage{cleveref}
\usepackage{subcaption}

\usepackage{lipsum}
\usepackage{graphicx}
\usepackage{epstopdf}
\usepackage{algorithmic}
\usepackage{listings}

\usepackage{booktabs}
\usepackage{ragged2e}
\usepackage{array}
\usepackage{diagbox}

\usepackage{microtype}

\usepackage{lineno}

\hypersetup{
  pdftitle={An algebraic multiscale framework for two-level overlapping Schwarz preconditioners},
  pdfauthor={Filipe Cumaru, Hadi Hajibeygi, Cornelis Vuik, Alexander Heinlein},
  pdfkeywords={multiscale methods, overlapping Schwarz preconditioning, domain decomposition methods, heterogeneous problems}
}

\crefname{lstlisting}{listing}{listings}
\Crefname{lstlisting}{Listing}{Listings}

\def\bbbr{{\rm I\!R}} 
\DeclareMathOperator{\dist}{dist} 
\DeclareMathOperator{\tr}{tr} 
\DeclareMathOperator{\diam}{diam} 

\title{An algebraic multiscale framework for two-level overlapping Schwarz preconditioners}

\author{
  Filipe Cumaru\thanks{Delft Institute of Applied Mathematics, Delft University of Technology, 2628 CD Delft, The Netherlands (\tt{\{f.a.cumarusilvaalves@tudelft.nl, a.heinlein@tudelft.nl, c.vuik\}@tudelft.nl})}
  \and Hadi Hajibeygi\thanks{Department of Geoscience and Engineering, Delft University of Technology, 2628 CV Delft, The Netherlands (\tt{h.hajibeygi@tudelft.nl})}
  \and Cornelis Vuik\footnotemark[1]
  \and Alexander Heinlein\footnotemark[1]
}
\shorttitle{Algebraic multiscale coarse space for OAS}
\shortauthor{F. Cumaru, H. Hajibeygi, C. Vuik, and A. Heinlein}

\begin{document}

\maketitle

\begin{abstract}
The robustness of two-level overlapping additive Schwarz (OAS) preconditioners for highly heterogeneous elliptic problems hinges on the choice of the coarse space. We propose an algebraic multiscale (AM) coarse space built from the system matrix alone through an algebraic formulation of the multiscale finite element method (MsFEM). The coarse basis functions are energy-minimizing extensions of the solutions of reduced boundary condition (RBC) problems on the subdomain interface, so that coefficient variations along the interface enter the coarse space. By posing the RBC problems on slabs that extend the interface, the construction also applies to general unstructured domain decompositions. Embedding the AM formulation in an energy-minimizing framework extends it to linear elasticity, and we show that existing condition number bounds for OAS with MsFEM coarse spaces carry over to the algebraic setting. For problems with high coefficient inclusions on the coarse nodes, the AM iteration count and condition number are unaffected by increasing the contrast. In 3D scalar diffusion with log-normal coefficients, AM needs about as many iterations as the generalized Dryja--Smith--Widlund (GDSW) coarse space, while solving a coarse problem ten times smaller. For linear elasticity, omitting the coarse basis functions associated with the rotational rigid body modes yields a fully algebraic coarse space that still scales well for homogeneous problems.
\end{abstract}

\begin{keywords}
  multiscale methods, overlapping Schwarz preconditioning, domain decomposition methods, heterogeneous problems
\end{keywords}

\begin{AMS}
  65F08, 65F10, 65N55, 65N30, 68W10
\end{AMS}

\section{Introduction} \label{sec:intro}

Large heterogeneous problems arise in various contexts such as fluid flow in porous media \cite{SLEIPNER2019, LI2022BILLIONCELL, SPE112024} and deformation of different materials \cite{NAKAJIMA2019TIREMECHANICS, TASAN2015STEEL}. The numerical simulation of these problems involves the solution of large systems of linear equations. Coupled with the multiscale nature of the physical properties of the media or materials involved, this task quickly becomes computationally demanding and requires a good choice of linear solver for an accurate and efficient solution.

Due to the superlinear memory and time complexity of direct solvers, iterative methods such as Krylov subspace methods are usually employed for the solution of large systems of linear equations. However, the convergence of these methods depends on the spectral properties of the system matrix \cite{SAAD2003}. An effective preconditioner is needed to accelerate the solution of the problem.

Domain decomposition methods provide a parallel divide-and-conquer preconditioning framework for the solution of large elliptic problems. Among the available formulations \cite{DOHRMANN2003BDDC, CHARBEL2001FETIDP, CHARBEL1991FETI}, we turn our attention to overlapping additive Schwarz (OAS) methods. These methods are based on the solution of local problems defined on overlapping subdomains. However, the classical one-level OAS method is generally not scalable, as the conditioning of the problem deteriorates when more subdomains are used \cite{CHANMATHEW1994DD}. The two-level variant of the OAS method is able to reinforce the scalability property via the solution of a global coarse problem that couples the local problems.

The robustness of two-level OAS methods depends on the global coarse problem, more specifically, the coarse space spanned by a set of coarse basis functions. Originally, a set of finite element shape functions defined on a coarse triangulation would be employed \cite{TOSELLI2005DDBOOK}, which require geometric information. For a purely algebraic construction, the basis functions may be, for instance, computed based on the non-overlapping domain decomposition. Coarse spaces such as the generalized Dryja--Smith--Widlund (GDSW) \cite{DOHRMANN2008GDSW1,DOHRMANN2008GDSW2} and the reduced dimension GDSW (RGDSW) \cite{DOHRMANN2017RGDSW} define the coarse basis functions as energy-minimizing extensions of trace functions defined on the interface of the non-overlapping subdomains that span the null space of the problem. Coarse spaces based on the solution of local generalized eigenvalue problems have been proposed in \cite{AGULLO2019,ALDAAS2023,DOLEAN2012DTN,EFENDIEV2013GMSFEM,GALVISEFENDIEV2010,GANDERLONELAND2017SHEM,HEINLEIN2018MS,HEINLEIN2019AGDSW,HEILEINSMETANA2025,SPILLANE2014GENEO}. In addition, related work on spectral multigrid methods has been presented in \cite{BREZINA2006ADAPTIVEAMG,CHARTIER2003RHOAMGE,EFENDIEV2011SPECTRALAMG}.

The multiscale finite element method (MsFEM) \cite{HOU1997MSFEM} introduces a set of basis functions computed as energy-minimizing extensions, into the interior of the coarse elements, of trace functions that are the solution of reduced boundary condition (RBC) problems on the interface. This procedure can better capture changes in the physical properties of the medium. Other variants have been proposed, such as the multiscale finite volume method (MsFV) \cite{JENNY2003MSFV} and multiscale mixed finite element methods \cite{ARBOGAST2002MMFEM,ARBOGASTBRYANT2002MMFEM}. Coarse spaces based on the MsFEM have been studied in \cite{BUCK2013MSFEMLE, GRAHAM2007MSPDES, HEINLEIN2018MS}. An algebraic formulation of MsFEM was proposed in \cite{LUNATI2009OPMSFV} and further developed as the algebraic multiscale solver (AMS) \cite{SOUZA2022AMSU,WANG2014AMS,ZHOUTCHELEPI2012TAMS} for highly heterogeneous porous media. AMS is employed as a preconditioning scheme for Krylov subspace methods in which the algebraic MsFEM functions are coupled with a local stage, e.g., ILU(0) or BILU, to improve convergence by better capturing low-frequency error modes. The performance of AMS was found to be comparable to that of an algebraic multigrid method.

In this work, we introduce a two-level OAS preconditioner with an algebraic multiscale coarse space for highly heterogeneous elliptic problems. We also discuss how it can be related to the RGDSW method \cite{DOHRMANN2017RGDSW}, allowing us to extend the original algebraic multiscale formulation to the linear elasticity equations. In addition, by posing the RBC problems on slabs that extend the interface entities, the construction also applies to general unstructured domain decompositions whose interface is not algebraically connected. Moreover, we review the bound on the condition number of the Schwarz operator with multiscale coarse spaces developed in \cite{GRAHAM2007MSPDES} when the algebraic formulation is considered. Although the MsFEM has been previously employed as a coarse space for the Schwarz method \cite{AARNES2002, BUCK2013MSFEMLE, GALVISEFENDIEV2010, HEINLEIN2018MS}, to the best of our knowledge, this is the first time a purely algebraic approach has been used in this context besides our own initial study in \cite{CUMARU2026DD28}. In \cite{ZHOU2026ALGEBRAICMS}, an algebraic formulation of the Generalized MsFEM (GMsFEM) \cite{EFENDIEV2013GMSFEM} was proposed, which is based on the solution of generalized eigenvalue problems at the subdomain level. We refrain from solving such problems and instead focus on the classical MsFEM based on the reduced boundary condition problems. Although robustness cannot be guaranteed in general, the resulting preconditioning scheme can handle a wide range of heterogeneous elliptic problems.

The remainder of this paper is organized as follows. In \cref{sec:model_problem} the formulations of the scalar diffusion and the linear elasticity equations are reviewed. Next, in \cref{sec:two_level_oas}, we introduce the two-level OAS method as well as energy-minimizing coarse spaces. In \cref{sec:ams_coarse_space}, we discuss the MsFEM coarse space and its algebraic formulation, followed by a computational complexity analysis and a convergence analysis for the 2D scalar diffusion problem. We then describe how slab interface entities handle algebraically unconnected interfaces and propose a framework shared with RGDSW for the design of algebraic multiscale coarse spaces. In \cref{sec:numerical_results}, we present numerical results on the robustness and weak scalability of the proposed methods, as well as a comparison with the AMS preconditioner. Finally, we draw conclusions in \cref{sec:conclusions}.

\section{Model problems} \label{sec:model_problem}

In this section, we describe the formulation of the model problems used in the study of the preconditioning methods discussed in this work.

\subsection{Scalar diffusion}

As the first model problem, we consider the scalar diffusion problem of the form
\begin{alignat}{2}
    \label{eq:diffusion_strong_form}
    - \nabla \cdot \left( \alpha (x) \nabla u (x) \right) &= f(x) \quad \quad && \text{in} \, \Omega \subset \bbbr^d, \\ \nonumber
    u &= 0 \quad \quad && \text{on} \, \partial \Omega_D,
\end{alignat}
where $d = 2,\, 3$ and $\partial \Omega_D = \partial \Omega$ is the Dirichlet boundary of the domain. The scalar coefficient function $\alpha : \Omega \to \bbbr$ is such that $\alpha_{\text{min}} < \alpha < \alpha_{\text{max}} < \infty$ for $\alpha_{\text{min}},\, \alpha_{\text{max}} \in \bbbr^{+}$. Furthermore, $\alpha$ is highly heterogeneous, possibly containing large coefficient jumps.

The weak form of \cref{eq:diffusion_strong_form} can be described as: find $u \in V$ such that
\begin{equation} \label{eq:diffusion_weak_form}
    a(u, v) = L(v), \quad \forall v \in V,
\end{equation}
where the symmetric positive definite bilinear form $a(\cdot, \cdot): V \times V \to \bbbr$ and the linear form $L(\cdot): V \to \bbbr$ are, respectively,
\begin{align*}
    a(u, v) &= \int_{\Omega} \alpha (x) \left( \nabla u \right)^{\top} \nabla v \; \text{d}x, \\
    L(v) &= \int_{\Omega} f(x) v \; \text{d}x, \quad f \in L^2(\Omega).
\end{align*}
Here, we consider the function space $V = H_0^1(\Omega)$.

The problem in \cref{eq:diffusion_weak_form} is discretized using piecewise linear finite elements on a triangulation $\tau_h$ of the computational domain $\Omega$. This results in the system of linear equations
\begin{equation} \label{eq:les}
    A\mathbf{u} = \mathbf{b},
\end{equation}
where $A$ is the sparse symmetric positive definite stiffness matrix, $\mathbf{b}$ is the right-hand side vector, and $\mathbf{u}$ is the vector of unknowns corresponding to the finite element solution in the finite element space $V^h(\Omega)$. The system of equations is solved using a Krylov subspace method. Due to the heterogeneity of the coefficient $\alpha(x)$, the system in \cref{eq:les} is usually ill-conditioned and demands a preconditioner for an efficient solution using an iterative method.

\subsection{Linear elasticity}

Additionally, we investigate the equations of compressible linear elasticity for an isotropic material:
\begin{alignat}{2} \label{eq:le_strong_form}
    - \nabla \cdot \sigma \left( \mathbf{u}(x) \right) &= \mathbf{f}(x) \quad \quad && \text{in } \, \Omega \subset \bbbr^d, \\ \nonumber
    \mathbf{u} &= \mathbf{0} \quad \quad && \text{on } \, \partial \Omega_D,
\end{alignat}
where $d = 2,\, 3$, $\partial \Omega_D = \partial \Omega$ is the Dirichlet boundary of the domain, $\mathbf{u} : \bbbr^d \to \bbbr^d$ is the displacement field, $\mathbf{f} : \bbbr^d \to \bbbr^d$ is the body force, and $\sigma(\mathbf{u})$ is the linearized stress tensor defined as
\begin{equation*}
    \sigma(\mathbf{u}) \coloneq 2 \mu \epsilon (\mathbf{u}) + \lambda \tr{\left( \epsilon (\mathbf{u}) \right)} I.
\end{equation*}
The linearized strain tensor $\epsilon(\mathbf{u})$ is given by
\begin{equation*}
    \epsilon(\mathbf{u}) \coloneq \frac{1}{2} \left( \nabla \mathbf{u} + (\nabla \mathbf{u})^{\top} \right).
\end{equation*}
In addition, the Lamé parameters $\lambda(x), \, \mu(x): \bbbr^d \to \bbbr$ are defined in terms of the Young modulus $E: \Omega \to \bbbr^+$ and the Poisson ratio $\nu \in [0, 1/2)$ as
\begin{equation*}
    \lambda(x) \coloneq \frac{E \nu}{(1 + \nu)(1 - 2\nu)}, \quad \mu(x) \coloneq \frac{E}{2(1 + \nu)}.
\end{equation*}
Like the scalar coefficient in \cref{eq:diffusion_strong_form}, the Young modulus may also be highly heterogeneous and contain large jumps.

The weak form of \cref{eq:le_strong_form} can be described in the same way as \cref{eq:diffusion_weak_form} with $V = \left( H_0^1(\Omega) \right)^d$, and the bilinear and linear forms defined as
\begin{equation} \label{eq:le_weak_form}
    \begin{aligned}
        a(\mathbf{u}, \mathbf{v}) & \coloneq \int_{\Omega} 2 \mu (x) \left( \epsilon (\mathbf{u}(x)) : \epsilon (\mathbf{v}(x)) \right) \, \mathrm{d}x + \int_{\Omega} \lambda (x) \left( \nabla \cdot \mathbf{u}(x) \right) \left( \nabla \cdot \mathbf{v}(x) \right) \, \mathrm{d}x, \\
        L(\mathbf{v}) & \coloneq \int_{\Omega} \mathbf{f}(x) \cdot \mathbf{v}(x) \, \mathrm{d}x.
    \end{aligned}
\end{equation}

As with the diffusion problem, we apply piecewise linear finite elements to discretize \cref{eq:le_weak_form}, which results in a system of linear equations of the form in \cref{eq:les}.

\section{Two-level overlapping Schwarz methods} \label{sec:two_level_oas}

\begin{figure}[t]
    \centering
    \begin{subfigure}[b]{0.3\textwidth}
        \includegraphics[height=0.89\textwidth]{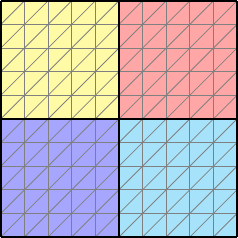}
        \caption{}
        \label{fig:non_ovlp_dd}
    \end{subfigure}
    \hfil
    \begin{subfigure}[b]{0.3\textwidth}
        \includegraphics[height=0.89\textwidth]{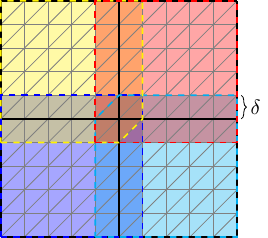}
        \caption{}
        \label{fig:ovlp_dd}
    \end{subfigure}
    \caption{An example of a non-overlapping (a) and an overlapping (b) domain decomposition used in the two-level Schwarz method. The dashed regions in (b) indicate the overlapping subdomains. Adapted from \cite{HEINLEIN2022LANDICE}.}
    \label{fig:dd_example}
\end{figure}

We examine the two-level overlapping additive Schwarz method as a preconditioning scheme for \cref{eq:les}. Let the computational domain $\Omega$ be partitioned into $N$ non-overlapping subdomains $\left\{ \Omega_i \right\}_{i = 1}^N$. Each non-overlapping subdomain is extended by layers of mesh elements to obtain an overlapping decomposition $\left\{ \Omega_i' \right\}_{i = 1}^N$ with overlap width $\delta$. This is illustrated in \cref{fig:dd_example}. The two-level overlapping additive Schwarz (OAS-2) preconditioner in matrix form is given by
\begin{equation} \label{eq:two_level_oas_preconditioner}
    M_{\text{OAS-2}}^{-1} = \Phi A_0^{-1} \Phi^{\top} + \sum_{i = 1}^{N} R_i^{\top} A_i^{-1} R_i,
\end{equation}
where $R_i: V \to V_i \coloneq V^h(\Omega_i')$ is the restriction to the local finite element space on the overlapping subdomain $\Omega_i'$, $A_i = R_i A R_i^{\top}$ are the corresponding local subdomain matrices, and $A_0 = \Phi^{\top} A \Phi$ is the coarse problem matrix.

The coarse problem is defined on the coarse space spanned by the coarse basis functions that form the prolongation operator $\Phi$. In general, this additional problem is required for robustness and scalability \cite{TOSELLI2005DDBOOK}. The choice of coarse space is a crucial ingredient of the method, as it determines the robustness of the preconditioner for different problems and, more specifically, with respect to the contrast of the coefficient function.

\subsection{Energy-minimizing coarse spaces} \label{sec:energy_min_coarse_spaces}

\begin{figure}[t]
    \centering
    \begin{subfigure}[b]{0.3\textwidth}
        \includegraphics[width=\textwidth]{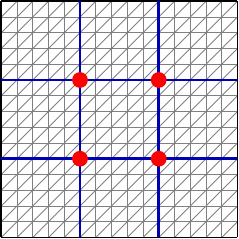}
        \caption{}
        \label{fig:dd_interface_2d}
    \end{subfigure}
    \hfil
    \begin{subfigure}[b]{0.3\textwidth}
        \includegraphics[width=\textwidth]{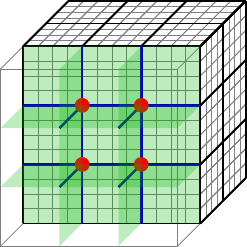}
        \caption{}
        \label{fig:dd_interface_3d}
    \end{subfigure}
    \caption{Interface of a non-overlapping domain decomposition in two (a) and three (b) dimensions. The vertex, edge and face components are highlighted in red, blue and green, respectively.}
    \label{fig:dd_interface}
\end{figure}

The coarse space of the two-level method may be computed as energy-minimizing extensions of trace functions defined on the interface of the subdomains. We start by defining $\Gamma \coloneq \bigcup_{i = 1}^{N} \partial \Omega_i \setminus \partial \Omega_D$ and $\text{I} \coloneq \Omega \setminus \Gamma$ as the set of interface and interior nodes, respectively. The non-Dirichlet boundary nodes are considered as part of the interior set. We note that the interface and its components may be defined algebraically; cf. \cite{HEINLEIN2020FROSCH}. We may reorder the system matrix $A$ and the prolongation operator $\Phi$ such that
\begin{equation*}
    A = \begin{bmatrix}
        A_{\text{I}\text{I}} & A_{\text{I}\Gamma} \\
        A_{\Gamma\text{I}} & A_{\Gamma\Gamma}
    \end{bmatrix}\, \text{and }
    \Phi = \begin{bmatrix}
        \Phi_{\text{I}} \\
        \Phi_{\Gamma}
    \end{bmatrix}.
\end{equation*}
Given the value of the coarse basis functions on the interface $\Phi_{\Gamma}$, the energy-minimizing extension into the interior is computed as
\begin{equation} \label{eq:energy_min_ext}
    \Phi = \begin{bmatrix}
        -A_{\text{I}\text{I}}^{-1} A_{\text{I}\Gamma} \\
        I_{\Gamma\Gamma}
    \end{bmatrix}
    \Phi_{\Gamma},
\end{equation}
where $I_{\Gamma\Gamma}$ is the identity matrix of dimension $|\Gamma| \times |\Gamma|$. We note that $A_{\text{I}\text{I}} = \diag_{i = 1}^N \left( A_{\text{I}\text{I}}^{(i)} \right)$ is block diagonal and therefore the application of $A_{\text{I}\text{I}}^{-1}$ can be done in parallel.

Different coarse spaces can be defined depending on $\Phi_{\Gamma}$ in \cref{eq:energy_min_ext}. In the following subsections, we turn our attention to two formulations for the interface trace values: the generalized Dryja--Smith--Widlund (GDSW) \cite{DOHRMANN2008GDSW1,DOHRMANN2008GDSW2} and the reduced dimension GDSW (RGDSW) \cite{DOHRMANN2017RGDSW}. These methods are designed such that the null space of the problem is included in the coarse space. This is the \textit{null space property}, and it is necessary to ensure the scalability of the preconditioner. It ensures that the convergence rate does not depend explicitly on the number of subdomains; cf. \cite[subsection~4.3.4]{SMITHBJORSTADGROPP1996DDBOOK}.

\subsubsection{The GDSW coarse space} \label{sec:gdsw_coarse_space}

In the GDSW coarse space, the interface $\Gamma$ is decomposed into connected components that are shared by the same set of subdomains. This amounts to splitting the interface into vertices, edges and, in 3D, faces. An example of such a decomposition is shown in \cref{fig:dd_interface}. The values of the coarse basis functions on the interface are set as the restriction of the null space of the problem to each interface component $\Gamma_i$. The interface operator can thus be written as
\begin{equation*}
    \Phi_{\Gamma}^{\text{GDSW}} =
    \begin{bmatrix}
        R_{\Gamma_1}^{\top} \Phi_{\Gamma_1} & \dots & R_{\Gamma_n}^{\top} \Phi_{\Gamma_n}
    \end{bmatrix} =
    \begin{bmatrix}
        R_{\Gamma_1}^{\top} Z_{\Gamma_1} & \dots & R_{\Gamma_n}^{\top} Z_{\Gamma_n}
    \end{bmatrix},
\end{equation*}
where $Z_{\Gamma_i}$ is the restriction of the null space to the interface component $\Gamma_i$. In general, the components $R_{\Gamma_i}^{\top} Z_{\Gamma_i}$ might be linearly dependent, thus requiring an additional orthonormalization step. For the scalar diffusion problem, the null space is spanned by
\begin{equation*}
    r_u \coloneq \begin{bmatrix}
        1
    \end{bmatrix}.
\end{equation*}
For the linear elasticity problem, the null space comprises the rigid body modes. In 2D, the space of rigid body modes is spanned by two translations
\begin{equation*}
    r_{u, 1} \coloneq \begin{bmatrix}
        1 \\
        0
    \end{bmatrix}, \;
    r_{u, 2} \coloneq \begin{bmatrix}
        0 \\
        1
    \end{bmatrix},
\end{equation*}
and one linearized rotation
\begin{equation*}
    r_{u, 3} \coloneq \begin{bmatrix}
        y \\
        -x
    \end{bmatrix}.
\end{equation*}
In 3D, the null space is spanned by three translations,
\begin{equation*}
    r_{u, 1} \coloneq \begin{bmatrix}
        1 \\
        0 \\
        0
    \end{bmatrix}, \;
    r_{u, 2} \coloneq \begin{bmatrix}
        0 \\
        1 \\
        0
    \end{bmatrix}, \;
    r_{u, 3} \coloneq \begin{bmatrix}
        0 \\
        0 \\
        1
    \end{bmatrix},
\end{equation*}
and three linearized rotations,
\begin{equation*}
    r_{u, 4} \coloneq \begin{bmatrix}
        y \\
        -x \\
        0
    \end{bmatrix}, \;
    r_{u, 5} \coloneq \begin{bmatrix}
        -z \\
        0 \\
        x
    \end{bmatrix}, \;
    r_{u, 6} \coloneq \begin{bmatrix}
        0 \\
        z \\
        -y
    \end{bmatrix}.
\end{equation*}

\subsubsection{The RGDSW coarse space} \label{sec:rgdsw_coarse_space}

The RGDSW coarse space \cite{DOHRMANN2017RGDSW} builds on GDSW by defining fewer coarse basis functions. Let $\mathcal{N}_j$ be a set of nodes shared by exactly the same subdomains, referred to as a nodal equivalence class (NEC). Furthermore, let $S_{\mathcal{N}_j}$ be the index set of subdomains that share the nodes in $\mathcal{N}_j$. A NEC $\mathcal{N}_j$ is an \textit{ancestor} of another NEC $\mathcal{N}_k$ if $S_{\mathcal{N}_k} \subset S_{\mathcal{N}_j}$. Conversely, $\mathcal{N}_k$ is an \textit{offspring} of $\mathcal{N}_j$. A \textit{coarse node} is defined as a NEC that has no ancestors.

Analogously to GDSW, $\Phi_{\Gamma}$ is also derived from the restriction of the null space to the interface components. However, in RGDSW, a smaller coarse space is obtained by grouping the components based on the hierarchy defined by the NECs. First, the restriction of the null space is computed for each coarse node and its offspring. Next, the values of the restriction are multiplied by an interface partition of unity (IPOU) function. This is required by the convergence theory of Schwarz methods \cite{TOSELLI2005DDBOOK}. The interface operator can thus be written as
\begin{equation} \label{eq:rgdsw_int_bf}
    \Phi_{\Gamma}^{\text{RGDSW}} =
    \begin{bmatrix}
        R_{c_1}^{\top} \Phi_{c_1} & \dots & R_{c_n}^{\top} \Phi_{c_n}
    \end{bmatrix} =
    \begin{bmatrix}
        R_{c_1}^{\top} \tilde{S}_{c_1} Z_{c_1} & \dots & R_{c_n}^{\top} \tilde{S}_{c_n} Z_{c_n}
    \end{bmatrix},
\end{equation}
where $Z_{c_i}$ is the restriction of the null space to the coarse node $c_i$ and its offspring, and $\tilde{S}_{c_i} \coloneq \diag{\left( \tilde{R}_{c_i}^{\top} p^{(c_i)} \right)}$ is a diagonal scaling matrix. The vector $p^{(c_i)}$ contains the values of the IPOU function for each node in $c_i$ and its offspring, and $\tilde{R}_{c_i}^{\top}$ maps a vector of nodal quantities to their respective dofs.

An algebraic option of IPOU is given by
\begin{equation*}
    p_j^{(c_i)} =
    \begin{cases}
        1 & \quad \text{if node $j$ is a coarse node}, \\
        \frac{1}{|\mathcal{C}_j|} & \quad \text{otherwise},
    \end{cases}
\end{equation*}
where $\mathcal{C}_j$ is the index set of coarse nodes that are ancestors of the NEC that contains $j$. This is referred to as option 1 in \cite{DOHRMANN2017RGDSW}. A less algebraic alternative based on inverse distance weighting may also be used and is defined as
\begin{equation*}
    p_j^{(c_i)} =
    \begin{cases}
        1 & \quad \text{if node $j$ is a coarse node}, \\
        \frac{1 / d_{c_i}(j)}{\sum\limits_{k \in \mathcal{C}_j} 1 / d_{c_k}(j)} & \quad \text{otherwise},
    \end{cases}
\end{equation*}
where $d_{c_i}(j)$ is a distance measure between node $j$ and the coarse node $c_i$. This is option 2.2 in \cite{DOHRMANN2017RGDSW}.

The product of the null space and the IPOU allows for a more flexible algebraic definition of the coarse space while still ensuring the null space property. This will be explored in the next section for the algebraic multiscale coarse space.

\section{An algebraic multiscale finite element coarse space} \label{sec:ams_coarse_space}

The multiscale finite element method (MsFEM) \cite{HOU1997MSFEM} introduces a set of nodal basis functions designed to capture the multiscale nature of the solution of the diffusion problem in \cref{eq:diffusion_weak_form} by incorporating information about the coefficient into the functions.

Let $\mathcal{T}^H$ be a triangulation of the domain $\Omega$ and $x_i$ an interior node of $\mathcal{T}^H$. For every cell $K \in \mathcal{T}^H$ such that $x_i \in \partial K$, the MsFEM basis functions satisfy
\begin{equation} \label{eq:msfem_bf_def}
    \begin{alignedat}{2}
        \nabla \cdot \left( \alpha (x) \nabla \Phi_i \right) &= 0 \quad \quad && \text{in } K, \\
        \Phi_i &= 0 \quad \quad && \text{on } \partial \Omega_D, \\
        \Phi_i &= \Phi_{i, \Gamma} \quad \quad && \text{on } \Gamma = \partial K \setminus \partial \Omega_D, \\
    \end{alignedat}
\end{equation}
where $\Phi_i$ is the basis function associated with the node $x_i$.

A central part of the local problems is the definition of the boundary data $\Phi_{i, \Gamma}$. A first alternative is to let $\Phi_i$ vary linearly on $\Gamma$. For heterogeneous problems, a more robust choice is to set $\Phi_{i, \Gamma}$ as the solution of a reduced boundary condition (RBC) problem similar to \cref{eq:msfem_bf_def}. This is done by first solving the RBC problem on the edges of $\partial K$ and then using these solutions as the boundary condition for an equivalent problem on the faces. For an edge $\mathcal{E}_{ij} \subset \partial K$ with endpoints $x_i$ and $x_j$, the 1D trace problem is defined as
\begin{equation} \label{eq:msfem_rbc_1d}
    \begin{alignedat}{2}
        \nabla \cdot \left( \alpha (x) \nabla \Phi_{i, \Gamma} \right) &= 0 \quad && \text{on } \mathcal{E}_{ij}, \\
        \Phi_{i, \Gamma}(x_i) &= 1, \\
        \Phi_{i, \Gamma}(x_j) &= 0.
    \end{alignedat}
\end{equation}
The solution of \cref{eq:msfem_rbc_1d} is then employed in an equivalent 2D trace problem on a face $\mathcal{F}$ of $\partial K$:
\begin{equation} \label{eq:msfem_rbc_2d}
    \begin{alignedat}{2}
        \nabla \cdot \left( \alpha (x) \nabla \Phi_{i, \Gamma} \right) &= 0 \quad && \text{on } \mathcal{F}, \\
        \Phi_{i, \Gamma}(x) &= \Phi_{i, \Gamma}|_{\mathcal{E}_{ij}} \quad && \text{for an edge } \mathcal{E}_{ij} \subset \partial \mathcal{F},
    \end{alignedat}
\end{equation}
where $\Phi_{i, \Gamma}|_{\mathcal{E}_{ij}}$ is the value of $\Phi_{i, \Gamma}$ on the edge $\mathcal{E}_{ij}$ as previously computed.

It is important to note that the formulation of the MsFEM basis functions as previously stated requires the definition of subgrids on the interface $\Gamma$ to solve \cref{eq:msfem_rbc_1d} and \cref{eq:msfem_rbc_2d}. In the next subsection, we describe an algebraic procedure to compute the basis functions based on the non-overlapping domain decomposition.

\subsection{Algebraic formulation of the MsFEM coarse basis functions} \label{sec:ams_basis_functions}

A purely algebraic formulation of the MsFEM was first introduced in \cite{LUNATI2009OPMSFV} and further discussed in \cite{WANG2014AMS, ZHOUTCHELEPI2012TAMS} as part of a solver framework applied to the simulation of Darcy flow.

We start by grouping the degrees of freedom (dofs) of the problem into vertices (V), edges (E), faces (F) and interior (I). This is illustrated in \cref{fig:dd_interface}. The system of equations in \cref{eq:les} can thus be rearranged to reflect this structure
\begin{equation} \label{eq:wirebasket_les}
    \begin{bmatrix}
        A_{\text{II}} & A_{\text{IF}} & A_{\text{IE}} & A_{\text{IV}} \\
        A_{\text{FI}} & A_{\text{FF}} & A_{\text{FE}} & A_{\text{FV}} \\
        A_{\text{EI}} & A_{\text{EF}} & A_{\text{EE}} & A_{\text{EV}} \\
        A_{\text{VI}} & A_{\text{VF}} & A_{\text{VE}} & A_{\text{VV}} \\
    \end{bmatrix}
    \begin{bmatrix}
        u_{\text{I}} \\
        u_{\text{F}} \\
        u_{\text{E}} \\
        u_{\text{V}}
    \end{bmatrix} =
    \begin{bmatrix}
        b_{\text{I}} \\
        b_{\text{F}} \\
        b_{\text{E}} \\
        b_{\text{V}}
    \end{bmatrix},
\end{equation}
where, due to the symmetry of $A$, $A_{ij} = A_{ji}^{\top}$, for $i, j \in \{ \text{I, F, E, V} \}$. Given a partition of the domain into non-overlapping subdomains, the decomposition of the interface may still be done algebraically by using the system matrix as an approximation of the mesh connectivity and employing the hierarchy of NECs discussed in \cref{sec:rgdsw_coarse_space} to classify the interface entities. More specifically, a node is a vertex if it belongs to a coarse node NEC, a face if it is in a NEC shared by only two subdomains, or an edge otherwise.

The core idea of the algebraic construction is to represent the RBC problems \cref{eq:msfem_rbc_1d} and \cref{eq:msfem_rbc_2d} using the matrix blocks in \cref{eq:wirebasket_les}. Instead of using a coarse triangulation and auxiliary finer grids to compute the basis functions, we take advantage of the non-overlapping domain decomposition as the coarse partition and then solve the equivalent problems on the original grid.

We begin by noticing that the solution of both \cref{eq:msfem_rbc_1d} and \cref{eq:msfem_rbc_2d} involves only the nodes on interface entities of the same or lower dimension. Moreover, for vertex nodes, the Kronecker delta property may be enforced algebraically by setting $\Phi_{\text{V}} = I_{\text{V}\text{V}}$, where $I_{\text{VV}}$ is an identity matrix of dimension $|\text{V}| \times |\text{V}|$. The resulting system of equations to be solved for $\Phi$ then has the form
\begin{equation*}
    \begin{bmatrix}
        A_{\text{II}} & A_{\text{IF}} & A_{\text{IE}} & A_{\text{IV}} \\
        0 & A_{\text{FF}} & A_{\text{FE}} & A_{\text{FV}} \\
        0 & 0 & A_{\text{EE}} & A_{\text{EV}} \\
        0 & 0 & 0 & I_{\text{VV}} \\
    \end{bmatrix}
    \begin{bmatrix}
        \Phi^{\text{AM}}_{\text{I}} \\
        \Phi^{\text{AM}}_{\text{F}} \\
        \Phi^{\text{AM}}_{\text{E}} \\
        \Phi^{\text{AM}}_{\text{V}}
    \end{bmatrix} =
    \begin{bmatrix}
        0 \\
        0 \\
        0 \\
        I_{\text{VV}}
    \end{bmatrix}.
\end{equation*}
The superscript ``AM'' stands for \textit{algebraic multiscale}.

The diagonal entries of the blocks $A_{\text{FF}}$ and $A_{\text{EE}}$ still take into account the influence of neighboring nodes in interface components of higher dimension and also in the interior. These need to be removed to obtain a discrete system that correctly represents the underlying problem. Let
\begin{equation} \label{eq:ams_diagonal_blocks}
    \begin{aligned}
        \tilde{A}_{\text{FF}} &= A_{\text{FF}} + \diag \left( A_{\text{FI}} \mathbf{1}_{\text{I}} \right), \\
        \tilde{A}_{\text{EE}} &= A_{\text{EE}} + \diag \left( A_{\text{EI}} \mathbf{1}_{\text{I}} \right) + \diag \left( A_{\text{EF}} \mathbf{1}_{\text{F}} \right),
    \end{aligned}
\end{equation}
where $\mathbf{1}_{\text{I}}$ and $\mathbf{1}_{\text{F}}$ are vectors of dimension $|\text{I}|$ and $|\text{F}|$, respectively, whose entries are all ones. The matrices $\tilde{A}_{\text{FF}}$ and $\tilde{A}_{\text{EE}}$ are block diagonal and correspond to the blocks $A_{\text{FF}}$ and $A_{\text{EE}}$, respectively, with the influence of the lower triangular blocks removed from the main diagonal. This procedure guarantees that the energy-minimality of constant functions is maintained, thus ensuring that a partition of unity is obtained, an important result in the context of Schwarz methods \cite{TOSELLI2005DDBOOK}.

The final form of the algebraic problem that defines the AM coarse basis functions is then given by
\begin{equation} \label{eq:les_rbc_ams}
    \begin{bmatrix}
        A_{\text{II}} & A_{\text{IF}} & A_{\text{IE}} & A_{\text{IV}} \\
        0 & \tilde{A}_{\text{FF}} & A_{\text{FE}} & A_{\text{FV}} \\
        0 & 0 & \tilde{A}_{\text{EE}} & A_{\text{EV}} \\
        0 & 0 & 0 & I_{\text{VV}} \\
    \end{bmatrix}
    \begin{bmatrix}
        \Phi^{\text{AM}}_{\text{I}} \\
        \Phi^{\text{AM}}_{\text{F}} \\
        \Phi^{\text{AM}}_{\text{E}} \\
        \Phi^{\text{AM}}_{\text{V}}
    \end{bmatrix} =
    \begin{bmatrix}
        0 \\
        0 \\
        0 \\
        I_{\text{VV}}
    \end{bmatrix}.
\end{equation}
The system in \cref{eq:les_rbc_ams} can be solved by block backward substitution leading to
\begin{equation} \label{eq:phi_ams}
    \Phi^{\text{AM}} =
    \begin{bmatrix}
        \Phi^{\text{AM}}_{\text{I}} \\
        \Phi^{\text{AM}}_{\text{F}} \\
        \Phi^{\text{AM}}_{\text{E}} \\
        \Phi^{\text{AM}}_{\text{V}}
    \end{bmatrix} =
    \begin{bmatrix}
        -A_{\text{II}}^{-1} \left( A_{\text{IV}} \Phi^{\text{AM}}_{\text{V}} + A_{\text{IE}} \Phi^{\text{AM}}_{\text{E}} + A_{\text{IF}} \Phi^{\text{AM}}_{\text{F}} \right) \\
        -\tilde{A}_{\text{FF}}^{-1} \left( A_{\text{FV}} \Phi^{\text{AM}}_{\text{V}} + A_{\text{FE}} \Phi^{\text{AM}}_{\text{E}} \right) \\
        -\tilde{A}_{\text{EE}}^{-1} A_{\text{EV}} \Phi^{\text{AM}}_{\text{V}} \\
        I_{\text{VV}}
    \end{bmatrix}.
\end{equation}
Following the same approach, we can derive the 2D version of the prolongation operator as
\begin{equation*}
    \Phi^{\text{AM}} =
    \begin{bmatrix}
        \Phi^{\text{AM}}_{\text{I}} \\
        \Phi^{\text{AM}}_{\text{E}} \\
        \Phi^{\text{AM}}_{\text{V}}
    \end{bmatrix} =
    \begin{bmatrix}
        -A_{\text{II}}^{-1} \left( A_{\text{IV}} \Phi^{\text{AM}}_{\text{V}} + A_{\text{IE}} \Phi^{\text{AM}}_{\text{E}} \right) \\
        -\tilde{A}_{\text{EE}}^{-1} A_{\text{EV}} \Phi^{\text{AM}}_{\text{V}} \\
        I_{\text{VV}}
    \end{bmatrix},
\end{equation*}
where $\tilde{A}_{\text{EE}}$ is obtained as in \cref{eq:ams_diagonal_blocks} ignoring the term related to face nodes.

Finally, we show that the algebraic multiscale coarse basis functions form a partition of unity.
\begin{lemma} \label{lemma:ams_pou}
    The coarse basis functions defined in \cref{eq:phi_ams} form a partition of unity, i.e., $\Phi^{\text{AM}} \mathbf{1}_{\text{V}} = \mathbf{1}$.
\end{lemma}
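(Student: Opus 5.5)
The idea is to show that the constant vector solves the block upper-triangular system \cref{eq:les_rbc_ams} with right-hand side $\mathbf{1}_{\text{V}}$ in the vertex block. Uniqueness then forces $\Phi^{\text{AM}}\mathbf{1}_{\text{V}}=\mathbf{1}$. The only ingredient is the null space property $A\mathbf{1}=0$ for the scalar diffusion stiffness matrix. This holds row-wise for every row associated with a node not adjacent to the Dirichlet boundary. For rows adjacent to eliminated Dirichlet nodes it has to be handled separately; this point is discussed at the end.

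First, multiply \cref{eq:les_rbc_ams} from the right by $\mathbf{1}_{\text{V}}$. The vector $v\coloneq\Phi^{\text{AM}}\mathbf{1}_{\text{V}}$ then satisfies the same triangular system with right-hand side $(0,0,0,\mathbf{1}_{\text{V}})$. The diagonal blocks $A_{\text{II}}$, $\tilde A_{\text{FF}}$, $\tilde A_{\text{EE}}$ and $I_{\text{VV}}$ are invertible; this is implicit in \cref{eq:phi_ams}, since $\tilde A_{\text{FF}}$ and $\tilde A_{\text{EE}}$ are weakly diagonally dominant M-matrix blocks. Hence $v$ is the unique solution, and it suffices to check that $\mathbf{1}$ satisfies each block row.

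I would verify the block rows by backward substitution, in order.
\begin{itemize}
\item \emph{V row:} $v_{\text{V}}=\mathbf{1}_{\text{V}}$ holds trivially.
\item \emph{E row:} I need $\tilde A_{\text{EE}}\mathbf{1}_{\text{E}}+A_{\text{EV}}\mathbf{1}_{\text{V}}=0$. Inserting \cref{eq:ams_diagonal_blocks} turns the left-hand side into $A_{\text{EI}}\mathbf{1}_{\text{I}}+A_{\text{EF}}\mathbf{1}_{\text{F}}+A_{\text{EE}}\mathbf{1}_{\text{E}}+A_{\text{EV}}\mathbf{1}_{\text{V}}$, because $\diag(w)\mathbf{1}=w$. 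This is exactly the edge block of $A\mathbf{1}$, which vanishes.
\item \emph{F row:} Similarly, $\tilde A_{\text{FF}}\mathbf{1}_{\text{F}}+A_{\text{FE}}\mathbf{1}_{\text{E}}+A_{\text{FV}}\mathbf{1}_{\text{V}}$ equals the face block of $A\mathbf{1}$.
\item \emph{I row:} $A_{\text{II}}\mathbf{1}_{\text{I}}+A_{\text{IF}}\mathbf{1}_{\text{F}}+A_{\text{IE}}\mathbf{1}_{\text{E}}+A_{\text{IV}}\mathbf{1}_{\text{V}}$ is the interior block of $A\mathbf{1}$.
\end{itemize}
The 2D version is identical with the face terms dropped.

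The main obstacle is the Dirichlet boundary. For nodes adjacent to eliminated Dirichlet dofs, the row sums of $A$ are positive, not zero. I would handle this in one of two ways.
\begin{itemize}
\item Interpret the statement with respect to the matrix assembled before eliminating Dirichlet dofs, that is, Neumann row sums. Since the boundary nodes are excluded from $\Gamma$, the identity $\Phi^{\text{AM}}\mathbf{1}_{\text{V}}=\mathbf{1}$ should then be understood away from the Dirichlet-adjacent rows.
\item Alternatively, argue that the modified diagonal in \cref{eq:ams_diagonal_blocks} preserves exactly the row-sum defect, so that the identity holds up to the Dirichlet layer, consistent with \cref{eq:msfem_bf_def}.
\end{itemize}
I expect the paper intends the first reading, namely the zero-row-sum property of the floating stiffness matrix, and would state this assumption explicitly.
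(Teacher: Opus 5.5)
Your proof is correct and is essentially the paper's proof: the paper also checks, block by block, that $\tilde A_{\text{EE}}\mathbf{1}_{\text{E}}+A_{\text{EV}}\mathbf{1}_{\text{V}}$, $\tilde A_{\text{FF}}\mathbf{1}_{\text{F}}+A_{\text{FE}}\mathbf{1}_{\text{E}}+A_{\text{FV}}\mathbf{1}_{\text{V}}$ and the interior analogue reduce via $\operatorname{diag}(w)\mathbf{1}=w$ to row sums of $A$, which vanish for P1 elements, and your ``$\mathbf{1}$ solves the triangular system, hence by uniqueness'' framing is only a repackaging of its backward substitution. Two small caveats: invertibility of $\tilde A_{\text{EE}}$ and $\tilde A_{\text{FF}}$ is presupposed by \cref{eq:phi_ams}, but weak diagonal dominance alone does not give it and you also need the algebraic connectivity assumption (cf.\ the example $\tilde A_{ee}=\mathbf{0}$ in \cref{sec:slab_fix}); and with eliminated Dirichlet dofs the row-sum defect is not confined to the Dirichlet-adjacent rows, since these inverses spread it over entire entities, which is how \cref{re:ams_pou} describes the failure.
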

\begin{proof}
    We proceed by inspecting the contribution of each group of interface entities. For vertex nodes, it follows from the Kronecker delta property of the multiscale functions that $\Phi^{\text{AM}}_{\text{V}} \mathbf{1}_{\text{V}} = I_{\text{VV}} \mathbf{1}_{\text{V}} = \mathbf{1}_{\text{V}}$.
    
    For edge nodes, the partition of unity property is satisfied if
    \begin{equation*}
        \Phi^{\text{AM}}_{\text{E}} \mathbf{1}_{\text{V}} = -\tilde{A}_{\text{EE}}^{-1} A_{\text{EV}} \mathbf{1}_{\text{V}} = \mathbf{1}_{\text{E}},
    \end{equation*}
    or equivalently
    \begin{equation*}
        \tilde{A}_{\text{EE}} \mathbf{1}_{\text{E}} + A_{\text{EV}} \mathbf{1}_{\text{V}} = \mathbf{0}.
    \end{equation*}
    Replacing the definition of $\tilde{A}_{\text{EE}}$ from \cref{eq:ams_diagonal_blocks}, we have
    \begin{align*}
        \tilde{A}_{\text{EE}} \mathbf{1}_{\text{E}} + A_{\text{EV}} \mathbf{1}_{\text{V}}
        &= A_{\text{EE}} \mathbf{1}_{\text{E}} + \diag \left( A_{\text{EI}} \mathbf{1}_{\text{I}} \right) \mathbf{1}_{\text{E}} + \diag \left( A_{\text{EF}} \mathbf{1}_{\text{F}} \right) \mathbf{1}_{\text{E}} + A_{\text{EV}} \mathbf{1}_{\text{V}} \\
        &= A_{\text{EE}} \mathbf{1}_{\text{E}} + A_{\text{EI}} \mathbf{1}_{\text{I}} + A_{\text{EF}} \mathbf{1}_{\text{F}} + A_{\text{EV}} \mathbf{1}_{\text{V}},
    \end{align*}
    where the $\diag$ operator can be removed as the multiplication by the vector of ones retrieves the original vector that was expanded into a diagonal matrix. The term above is the sum of the entries in each row of the system matrix related to edge nodes. Since the problem is discretized using piecewise linear finite elements, the sum of the entries of a row corresponding to a node not on the Dirichlet boundary is zero. Therefore, we have a partition of unity for edge nodes.
    
    We may proceed similarly for face nodes:
    \begin{align*}
        \tilde{A}_{\text{FF}} \mathbf{1}_{\text{F}} + A_{\text{FV}} \mathbf{1}_{\text{V}} + A_{\text{FE}} \Phi^{\text{AM}}_{\text{E}} \mathbf{1}_{\text{V}}
        &= A_{\text{FF}} \mathbf{1}_{\text{F}} + \diag \left( A_{\text{FI}} \mathbf{1}_{\text{I}} \right) \mathbf{1}_{\text{F}} + A_{\text{FV}} \mathbf{1}_{\text{V}} + A_{\text{FE}} \mathbf{1}_{\text{E}} \\
        &= A_{\text{FF}} \mathbf{1}_{\text{F}} + A_{\text{FI}} \mathbf{1}_{\text{I}} + A_{\text{FV}} \mathbf{1}_{\text{V}} + A_{\text{FE}} \mathbf{1}_{\text{E}} \\
        &= \mathbf{0}.
    \end{align*}
    Thus, the partition of unity property is satisfied for face nodes as well.

    Repeating the same steps for the interior nodes, we have
    \begin{align*}
        A_{\text{II}} \mathbf{1}_{\text{I}} + \left( A_{\text{IV}} \Phi^{\text{AM}}_{\text{V}} + A_{\text{IE}} \Phi^{\text{AM}}_{\text{E}} + A_{\text{IF}} \Phi^{\text{AM}}_{\text{F}} \right) \mathbf{1}_{\text{V}}
        &= A_{\text{II}} \mathbf{1}_{\text{I}} + A_{\text{IV}} \mathbf{1}_{\text{V}} + A_{\text{IE}} \mathbf{1}_{\text{E}} + A_{\text{IF}} \mathbf{1}_{\text{F}} \\
        &= \mathbf{0}.
    \end{align*}
    Hence $\Phi^{\text{AM}}_{\text{I}} \mathbf{1}_{\text{V}} = \mathbf{1}_{\text{I}}$.
\end{proof}

\begin{remark} \label{re:ams_pou}
    The assignment of the Dirichlet boundary conditions in the system matrix affects the partition of unity property of the coarse basis functions. If the Dirichlet conditions are assigned by setting the rows and columns of the system matrix corresponding to the boundary nodes to zero and placing ones on the diagonal, then, for the interface entities with nodes neighboring the Dirichlet boundary, the partition of unity property is not satisfied. This is because the sum of the entries in the rows corresponding to these nodes is no longer zero. Nevertheless, the partition of unity may be enforced by normalizing the edge and face values during the computation of the prolongation operator.
\end{remark}

\subsection{Computational complexity analysis} \label{sec:comp_analysis}

Before introducing the theoretical results on the AM coarse space, we discuss the asymptotic time complexity to compute the coarse basis functions defined in \cref{eq:phi_ams} and set up the two-level Schwarz preconditioner in \cref{eq:two_level_oas_preconditioner}.

Let $n_{\text{V}}$, $n_{\text{E}}$, $n_{\text{F}}$, and $n_{\text{I}}$ be the number of vertex, edge, face, and interior nodes, respectively. In addition, let $N$ be the number of non-overlapping subdomains, $n_{\Omega_i}$ the maximum number of nodes in an overlapping subdomain, and $nnz(B)$ the number of non-zero entries in a sparse matrix $B$.

We split the computations according to the node classes. For vertex nodes, only an identity matrix is initialized in $O(n_{\text{V}})$ operations. For edge nodes, the computations can be divided into two main steps:
\begin{enumerate}
    \item Set $\tilde{A}_{\text{EE}} = A_{\text{EE}} + \diag \left( A_{\text{EI}} \mathbf{1}_{\text{I}} \right) + \diag \left( A_{\text{EF}} \mathbf{1}_{\text{F}} \right)$; and
    \item Solve $\tilde{A}_{\text{EE}} \Phi_{\text{E}} = A_{\text{EV}}$.
\end{enumerate}
The first step may be implemented with one sparse matrix addition, two sparse matrix-vector products, and one vector addition. This requires $O(nnz(A_{\text{EI}}) + nnz(A_{\text{EF}}) + nnz(A_{\text{EE}}) + n_{\text{E}})$ operations. The second step involves the solution of a linear system with an SPD matrix for $n_{\text{V}}$ right-hand side vectors. It can be implemented with $O(n_{\text{V}} n_{\text{E}}^2)$ operations.

Proceeding similarly for face nodes, we need to perform the following steps:
\begin{enumerate}
    \item Set $\tilde{A}_{\text{FF}} = A_{\text{FF}} + \diag \left( A_{\text{FI}} \mathbf{1}_{\text{I}} \right)$;
    \item Compute $A_{\text{FE}} \Phi_{\text{E}} + A_{\text{FV}} \Phi_{\text{V}}$; and
    \item Solve $\tilde{A}_{\text{FF}} \Phi_{\text{F}} = A_{\text{FE}} \Phi_{\text{E}} + A_{\text{FV}} \Phi_{\text{V}}$.
\end{enumerate}
Step 1 involves one sparse matrix addition and one sparse matrix-vector product, thus requiring $O(nnz(A_{\text{FI}}) + nnz(A_{\text{FF}}) + n_{\text{F}})$ operations. Step 2 can be implemented with one sparse matrix-sparse matrix product and one sparse matrix addition. The term $A_{\text{FV}} \Phi_{\text{V}}$ does not need to be explicitly evaluated since $\Phi_{\text{V}} = I_{\text{VV}}$. Therefore, $O(nnz(A_{\text{FE}}) nnz(\Phi_{\text{E}}) + nnz(A_{\text{FE}} \Phi_{\text{E}}) + nnz(A_{\text{FV}}))$ operations are performed. In the last step, the solution of the linear system can be done in $O(n_{\text{V}} n_{\text{F}}^2)$ operations.

For interior nodes, the main stages are:
\begin{enumerate}
    \item Compute $A_{\text{IF}} \Phi_{\text{F}}$;
    \item Compute $A_{\text{IE}} \Phi_{\text{E}}$;
    \item Perform the addition $A_{\text{IF}} \Phi_{\text{F}} + A_{\text{IE}} \Phi_{\text{E}} + A_{\text{IV}} \Phi_{\text{V}}$; and
    \item Solve $A_{\text{II}} \Phi_{\text{I}} = A_{\text{IF}} \Phi_{\text{F}} + A_{\text{IE}} \Phi_{\text{E}} + A_{\text{IV}} \Phi_{\text{V}}$.
\end{enumerate}
Steps 1 and 2 demand $O(nnz(A_{\text{IF}}) nnz(\Phi_{\text{F}}))$ and $O(nnz(A_{\text{IE}}) nnz(\Phi_{\text{E}}))$ operations. Next, step 3 involves $O(nnz(A_{\text{IF}} \Phi_{\text{F}}) + nnz(A_{\text{IE}} \Phi_{\text{E}}) + nnz(A_{\text{IV}}))$ operations.

Finally, looking into the Schwarz preconditioner in \cref{eq:two_level_oas_preconditioner}, its setup involves the factorization of the $N$ local subproblems and of the coarse matrix, which require $O(Nn_{\Omega_i}^2)$ and $O(n_{\text{V}}^2)$ operations, respectively. Moreover, for the application of the preconditioner, $O(Nn_{\Omega_i}^2 + n_{\text{V}}^2)$ operations are performed corresponding to the solution of the local subproblems and the coarse problem.

\subsection{Convergence analysis of the scalar diffusion case} \label{sec:conv_analysis_ams}

In this subsection, we review the condition number bound for multiscale coarse spaces discussed in \cite{GRAHAM2007MSPDES} and its application to our algebraic multiscale coarse space. We closely follow the proof presented there for the 2D scalar diffusion case.

The upper bound on the condition number of the two-level overlapping Schwarz operator can be expressed as
\begin{equation} \label{eq:cond_num_bound_msfem}
    \kappa \left( M_{\text{OAS-2}}^{-1} A \right) \lesssim \pi \left( \alpha \right) \gamma \left( 1 \right) \max_{1 \leq i \leq N} \left( 1 + \frac{H_i}{\delta_i} \right) + \gamma \left( \alpha \right),
\end{equation}
where $H_i$ and $\delta_i$ are the diameter and the overlap size related to subdomain $\Omega_i$, and $\pi \left( \alpha \right)$ and $\gamma (\alpha)$ are the partitioning and the coarse space robustness indicators, respectively. These functions evaluate how well-chosen the overlapping domain decomposition and the coarse basis functions are with respect to some coefficient distribution $\alpha$. Moreover, the relation $a \lesssim b$ means that $a \leq Cb$ for some constant $C > 0$ independent of the mesh size, the subdomain parameters and the coefficient function.

The partitioning robustness indicator is defined by
\begin{equation} \label{eq:partition_robust_ind}
    \pi (\alpha) = \inf_{\{ \chi_i \} \in \Pi(\{ \Omega_i' \})}{\pi \left( \alpha, \{ \chi_i \} \right)},
\end{equation}
where $\Pi(\{ \Omega_i' \})$ denotes the set of all partitions of unity $\{ \chi_i \}$ subordinate to the overlapping domain decomposition, and
\begin{equation} \label{eq:partition_robust_ind_pou}
    \pi \left( \alpha, \{ \chi_i \} \right) = \max_{i = 1}^N{\delta_i^2 || \alpha | \nabla \chi_i |^2 ||_{L_{\infty}(\Omega)}}
\end{equation}
is the robustness indicator for a specific partition of unity. Since \cref{eq:partition_robust_ind} and \cref{eq:partition_robust_ind_pou} depend only on the overlapping subdomains, the partitioning robustness indicator is not affected by changes in the coarse space.

The coarse space robustness indicator, in turn, is given by
\begin{equation*}
    \gamma (\alpha) \coloneq \max_{p \in \mathcal{I}^H}{\left\{ 1 + H_p^{2 - d} |\Phi_p|_{H^1(\Omega), \alpha}^2 \right\}},
\end{equation*}
where $\mathcal{I}^H$ is the index set of coarse nodes in $\mathcal{T}^H$, $d$ is the dimension of the problem (e.g., $d = 2$ in 2D), $|u|_{H^1(\Omega), \alpha}^2 \coloneq \int_{\Omega} \alpha |\nabla u|^2 \;\text{d}x$, and $H_p \coloneq \diam{\omega_p}$ with $\omega_p$ being the set of all coarse cells $K \in \mathcal{T}^H$ that have the coarse node $x_p$ on their closure.

\begin{figure}[t]
  \centering
  \includegraphics[width=0.75\textwidth]{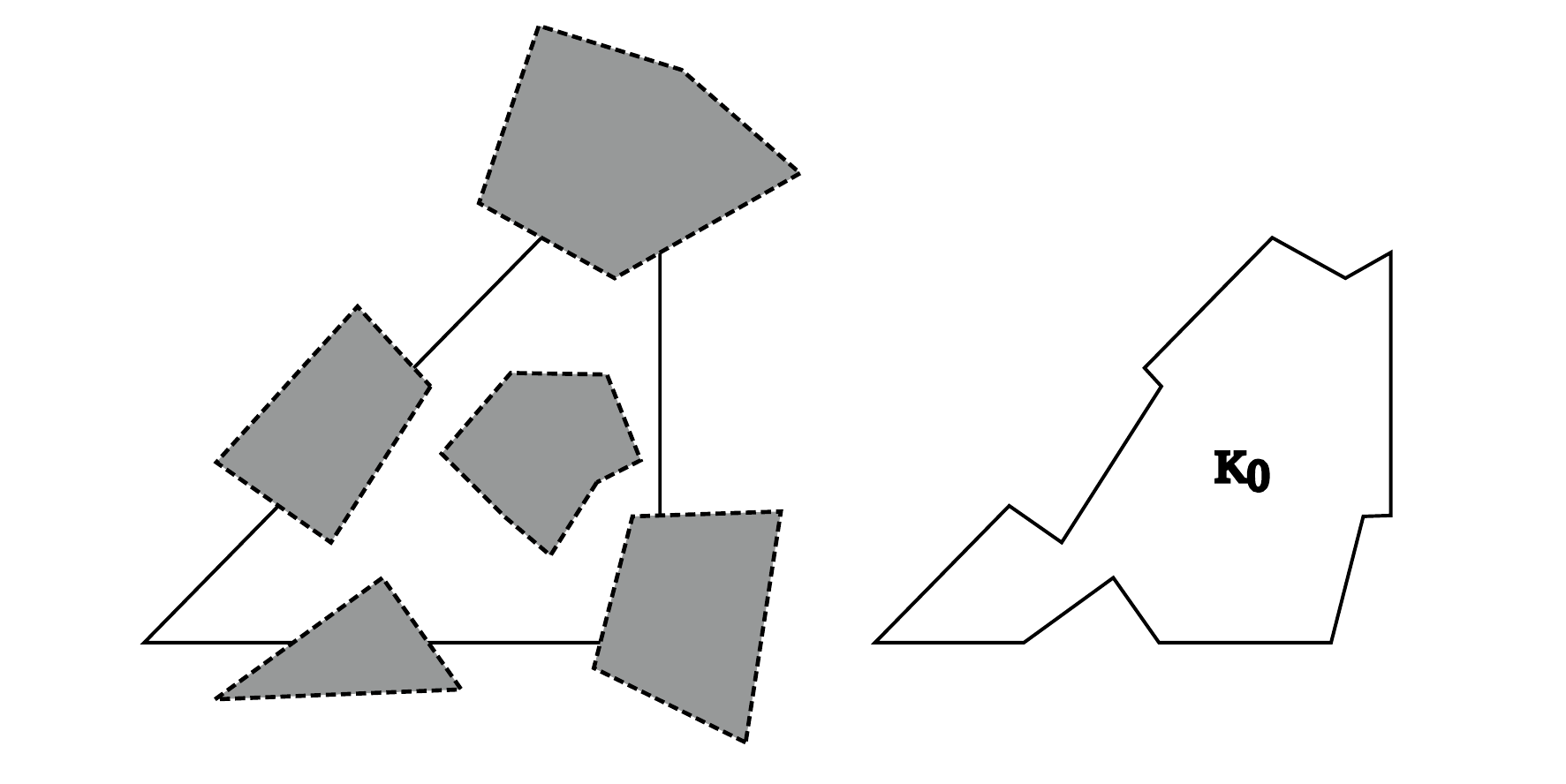}
  \caption{Illustration of the components $K(\eta)$ on a coarse cell $K$ (left) and the corresponding region $K_0$ excluding the boundary components $K^B(\eta)$ (right). Adapted from \cite{GRAHAM2007MSPDES}.}
  \label{fig:coarse_cell_coeff_islands}
\end{figure}

Before discussing an upper bound on $\gamma(\alpha)$ for the MsFEM coarse basis functions, we first introduce the notation used in \cite{GRAHAM2007MSPDES}. Let $K \in \mathcal{T}^H$ be a cell on a coarse triangulation as in \cref{eq:msfem_bf_def}. We define the set
\begin{equation*}
    K(\eta) \coloneq \left\{ x \in K \, | \, \alpha(x) > \eta \right\}
\end{equation*}
for some arbitrary constant $\eta \geq 1$. The set $K(\eta)$ may be decomposed as
\begin{equation*}
    K(\eta) = K^I(\eta) \cup K^B(\eta),
\end{equation*}
where $K^B(\eta)$ contains the components of $K(\eta)$ whose closure touches $\partial K$, and $K^I(\eta)$ contains the interior components. We also define the distance measure
\begin{equation*}
    \tilde{\epsilon} \left( \eta, K \right) \coloneq \dist{\left( K^I \left( \eta \right), \partial K \cup K^B \left( \eta \right) \right)}.
\end{equation*}
In addition, some assumptions on $K(\eta)$ and $\tilde{\epsilon} \left( \eta, K \right)$ are required:
\begin{enumerate}
    \item $K^B(\eta)$ and $K^I(\eta)$ should be well separated and $K_0 \coloneq K \setminus K^B(\eta)$ should contain a sufficiently large part of $K$ such that $\tilde{\epsilon} \left( \eta, K \right) > \frac{3h}{2}$ and $|K_0| \gtrsim H_K^2$.
    \item We may write $K^B(\eta) = \bigcup_{l = 1}^L K^B_l(\eta)$, where each $K^B_l(\eta)$ is simply connected and pairwise disjoint, and $\alpha$ is constant on the closure of $K^B_l(\eta)$, i.e., $\alpha(x) = \alpha_l,\, x \in \overline{K^B_l(\eta)}$.
    \item The sets $\Gamma_l^B(\eta) \coloneq \overline{K^B_l(\eta)} \cap \partial K$ are also simply connected and $\Gamma^B(\eta) \coloneq \bigcup_{l = 1}^L \Gamma^B_l(\eta)$ does not cover much of any edge $e$ of $\partial K$, i.e., $| e \setminus \Gamma^B(\eta) | \gtrsim \tilde{\epsilon} \left( \eta, K \right)$.
    \item $K_l^B(\eta)$ has a polygonal boundary such that each side has length $\gtrsim H_K$.
    \item The coefficient $\alpha$ is continuous as one crosses the boundary of an element $K$.
\end{enumerate}
An example of a valid decomposition $K(\eta)$ is illustrated in \cref{fig:coarse_cell_coeff_islands}.

If the assumptions above hold true for each $K$, then $\gamma(\alpha)$ can be bounded from above by
\begin{equation} \label{eq:coarse_space_ind_msfem}
    \gamma \left( \alpha \right) \lesssim \eta \max_{K \in \mathcal{T}^H} \left\{ \left( \frac{H_K^2}{\tilde{\epsilon} \left( \eta, K \right)} \right)^2 \left( 1 + \log{\frac{\eta H_K}{\tilde{\epsilon} \left( \eta, K \right)}} \right) \right\} + \eta^2 \max_{K \in \mathcal{T}^H} \left\{ \frac{H_K}{\tilde{\epsilon} \left( \eta, K \right)} \right\}.
\end{equation}
We note that \cref{eq:coarse_space_ind_msfem} remains bounded even if $\alpha_{max} \to \infty$.

For the algebraic variant of the MsFEM coarse basis functions, we need an additional assumption on $K$ if it is formed by the union of fine-scale elements:
\begin{enumerate}
    \setcounter{enumi}{5}
    \item $\partial K$ is \textit{algebraically connected}, i.e., given the graph induced by the system matrix $A$, for each pair of nodes $x_m, x_n \in \partial K$, there exists a path $\{ x_{i_1}, x_{i_2}, \ldots, x_{i_k} \}$ such that $x_{i_1} = x_m$, $x_{i_k} = x_n$, and for two consecutive nodes in the path, $A_{i_{j} i_{j+1}} \neq 0$.
\end{enumerate}
This is a generalization to the algebraic setting of the requirement that $\partial K$ be simply connected. Moreover, it is a necessary condition to ensure that the submatrices $\tilde{A}_{\text{EE}}$ and $\tilde{A}_{\text{FF}}$ are invertible.

To verify that the algebraic multiscale coarse space satisfies \cref{eq:cond_num_bound_msfem}, it suffices to show that the following assumptions on the coarse basis functions $\Phi_j$ are satisfied for each coarse node $x_j$:
{
    \renewcommand{\labelenumi}{(\textbf{A\arabic{enumi}})}
    \begin{enumerate}
        \item $\Phi_j \in S^h(\Omega)$, where $S^h(\Omega)$ is the space of piecewise linear functions with respect to the fine triangulation of the domain $\Omega$;
        \item $\Phi_j(x_k) = \delta_{jk}$ for every coarse node $x_k$;
        \item $\supp{\Phi_j} \subset \overline{\omega}_j$, where $\omega_j$ is the union of all subdomains that contain the coarse node $j$;
        \item $\sum_{j} \Phi_j (x) = 1, \, x \in \overline{\Omega}$; and
        \item $|| \Phi_j ||_{L_{\infty}(\Omega)} \lesssim 1$.
    \end{enumerate}
}
Additionally, to ensure that the bound on the coarse space robustness indicator is also valid, we need to impose the following condition on the boundary data $\Phi_{j, \Gamma}$ in \cref{eq:msfem_bf_def}:
{
    \renewcommand{\labelenumi}{(\textbf{A\arabic{enumi}})}
    \begin{enumerate}
        \setcounter{enumi}{5}
        \item $\Phi_{j, \Gamma}(x) = 0$ on any edge of $\partial K$ that does not contain the coarse node $j$.
    \end{enumerate}
}

We can now verify that the assumptions on the coarse basis functions are satisfied by the algebraic multiscale coarse space.

\begin{theorem}
    The condition number of the algebraic multiscale two-level overlapping Schwarz operator is bounded by \cref{eq:cond_num_bound_msfem} with the coarse space robustness indicator given by \cref{eq:coarse_space_ind_msfem}.
\end{theorem}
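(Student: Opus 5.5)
The plan is to reduce the statement to the abstract framework of \cite{GRAHAM2007MSPDES}. There, \cref{eq:cond_num_bound_msfem} is proved for any coarse space whose basis functions satisfy (\textbf{A1})--(\textbf{A5}). The bound \cref{eq:coarse_space_ind_msfem} on $\gamma(\alpha)$ additionally requires (\textbf{A6}) together with geometric assumptions 1--5 on the coarse cells. We therefore check (\textbf{A1})--(\textbf{A6}) for $\Phi^{\text{AM}}$, working in 2D with the coarse cells $K$ taken to be the non-overlapping subdomains $\Omega_i$. Assumption 6 (algebraic connectivity of $\partial K$) guarantees that $\tilde{A}_{\text{EE}}$ is invertible, so \cref{eq:phi_ams} is well defined.

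The structural properties follow directly from the construction. For (\textbf{A1}), each column of $\Phi^{\text{AM}}$ is a nodal vector on the fine mesh and hence defines a piecewise linear function in $S^h(\Omega)$. (\textbf{A2}) is the Kronecker property $\Phi^{\text{AM}}_{\text{V}} = I_{\text{VV}}$. (\textbf{A4}) is \cref{lemma:ams_pou}; near the Dirichlet boundary we use the normalization of \cref{re:ams_pou}.

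For (\textbf{A3}) and (\textbf{A6}) we use the block structure of $\tilde{A}_{\text{EE}}$. This matrix is block diagonal over edges, because coupling between different edges only passes through vertices or interiors. Hence the edge values of $\Phi_j$ on an edge $\mathcal{E}$ are $-\tilde{A}_{\mathcal{E}\mathcal{E}}^{-1}A_{\mathcal{E}\text{V}}e_j$, which vanishes unless $j$ is an endpoint of $\mathcal{E}$. This gives (\textbf{A6}). The interior extension $-A_{\text{II}}^{-1}(\cdot)$ is block diagonal by subdomain, so $\Phi_j$ vanishes in every subdomain whose boundary carries zero data, which gives (\textbf{A3}).

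For (\textbf{A5}) we use a discrete maximum principle. We need $\tilde{A}_{\mathcal{E}\mathcal{E}}$ and $A_{\text{II}}$ to be M-matrices, with nonpositive off-diagonal entries and zero row sums once the vertex and edge couplings are included, which \cref{lemma:ams_pou}'s computation shows. This requires a mesh for which the P1 stiffness matrix has nonpositive off-diagonals, for example a non-obtuse triangulation. Under that assumption:
\begin{itemize}
\item $-\tilde{A}_{\mathcal{E}\mathcal{E}}^{-1}A_{\mathcal{E}\text{V}}$ is entrywise nonnegative, and its row sums equal one, so the edge values lie in $[0,1]$;
\item the same argument applied to $A_{\text{II}}$ puts the interior values in $[0,1]$.
\end{itemize}
Thus $\|\Phi_j\|_{L_\infty}\le 1$. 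Without the M-matrix assumption we would instead have to accept (\textbf{A5}) as a hypothesis.

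The main obstacle is identifying the algebraic edge problem with the continuous 1D trace problem \cref{eq:msfem_rbc_1d}, since the estimate \cref{eq:coarse_space_ind_msfem} in \cite{GRAHAM2007MSPDES} uses the energy-minimizing and boundedness properties of the edge data. The approach is to observe that, with the diagonal correction \cref{eq:ams_diagonal_blocks}, $\tilde{A}_{\mathcal{E}\mathcal{E}}$ is a weighted graph Laplacian along the edge path. Its weights are the fine couplings $-A_{kl}$, which scale with $\alpha$. Its solution is therefore a discrete harmonic function along the edge, as the 1D problem requires. Then:
\begin{itemize}
\item the interior extension is discrete $\alpha$-harmonic in each $K$, matching \cref{eq:msfem_bf_def};
\item the boundary data are in $[0,1]$, vanish off the edges containing $x_j$, and inside $K^B_l(\eta)$, where $\alpha$ is constant, they coincide with the piecewise linear interpolant.
\end{itemize}
The proof of \cite{GRAHAM2007MSPDES}, which uses only these properties together with assumptions 1--5, then carries over verbatim and yields \cref{eq:coarse_space_ind_msfem}.
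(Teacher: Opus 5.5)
Your proposal follows the paper's proof. Both verify (\textbf{A1})--(\textbf{A6}) in the same way: $\Phi^{\text{AM}}_{\text{V}} = I_{\text{VV}}$ and \cref{lemma:ams_pou} give the structural assumptions, the block-diagonal structure of $\tilde{A}_{\text{EE}}$ and $A_{\text{II}}$ gives the support properties, and a discrete maximum principle on non-obtuse meshes gives (\textbf{A5}). You make that last step explicit through the M-matrix property, whereas the paper cites \cite[Remark~4.2]{GRAHAM2007MSPDES}.

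Your additional identification of $\tilde{A}_{\mathcal{E}\mathcal{E}}$ as an $\alpha$-weighted 1D graph Laplacian goes beyond what the paper argues. If you keep it, note that the boundary-island estimate relies on the resulting constant discrete flux, which gives a tangential slope of order $1/\alpha_l$ along $\Gamma^B_l(\eta)$. It does not rely on agreement with a linear interpolant, so you should state that property instead.
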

\begin{proof}
    We begin by noticing that assumptions (A1) and (A2) are satisfied by definition of the prolongation operator, as $\Phi^{\text{AM}}_{\text{V}} = I_{\text{VV}}$ and the RBC problems are solved on the space of piecewise linear functions. Additionally, from \cref{lemma:ams_pou}, we have that the partition of unity property is met, thus ensuring that assumption (A4) is true.

    To verify assumption (A3), we recall that the submatrices $\tilde{A}_{\text{EE}}$, $\tilde{A}_{\text{FF}}$, and $A_{\text{II}}$ are block diagonal. For a given coarse basis function $\Phi_j$, we may group the diagonal blocks related to nodes inside and outside $\omega_j$. On the edges, we have
    \begin{align*}
        \tilde{A}_{\text{EE}} \Phi_{\text{E}, j}^{\text{AM}} &= -A_{\text{EV}, j} \\
        \implies
        \begin{bmatrix}
            \tilde{A}_{\text{EE}}^{\omega_j} & 0 \\
            0 & \tilde{A}_{\text{EE}}^{\Omega \setminus \omega_j}
        \end{bmatrix}
        \begin{bmatrix}
            \Phi_{\text{E}, j}^{\text{AM}, \omega_j} \\
            \Phi_{\text{E}, j}^{\text{AM}, \Omega \setminus \omega_j}
        \end{bmatrix} &=
        \begin{bmatrix}
            -A_{\text{EV}, j}^{\omega_j} \\
            -A_{\text{EV}, j}^{\Omega \setminus \omega_j}
        \end{bmatrix},
    \end{align*}
    where $A_{\text{EV}, j}$ is the $j$-th column of $A_{\text{EV}}$. For nodes in $\Omega \setminus \omega_j$,
    \begin{equation*}
        \Phi_{\text{E}, j}^{\text{AM}, \Omega \setminus \omega_j} = -\left( \tilde{A}_{\text{EE}}^{\Omega \setminus \omega_j} \right)^{-1} A_{\text{EV}, j}^{\Omega \setminus \omega_j}.
    \end{equation*}
    Since no node outside $\omega_j$ is connected to the coarse node $x_j$, $A_{\text{EV}, j}^{\Omega \setminus \omega_j} = 0$ and, as a consequence, $\Phi_{\text{E}, j}^{\text{AM}, \Omega \setminus \omega_j} = 0$. Therefore, $\Phi_{\text{E}, j}^{\text{AM}}$ has nonzero values only in $\omega_j$. We may proceed similarly for the remaining components $\Phi_{\text{F}, j}^{\text{AM}}$ and $\Phi_{\text{I}, j}^{\text{AM}}$ to obtain the same result and verify that $\supp{\Phi_j} \subset \overline{\omega}_j$. Moreover, as a consequence of the Kronecker delta property of $\Phi_j$ and the fact that the right-hand side of the local problem for nodes on $\partial \omega_j$ is zero, we have that $\Phi_j(x) = 0$ on $\partial \omega_j$ and thus assumption (A6) is also satisfied.

    Finally, for assumption (A5), we invoke the fact that, for any piecewise constant coefficient function, if no element $K \in \mathcal{T}^H$ has an obtuse angle, then the energy-minimizing extension of the boundary data $\Phi_{j, \Gamma}$ respects the discrete maximum principle; cf. \cite[Remark~4.2]{GRAHAM2007MSPDES}. From the boundary conditions of the RBC problem, the maximum value of $\Phi_j$ is attained at the coarse node $x_j$ and is equal to 1. Therefore, $|| \Phi_j ||_{L_{\infty}(\Omega)} = 1$. Additionally, as previously shown, the minimum value is attained on $\partial \omega_j$ and is equal to 0. Hence, $0 \leq \Phi_{j} \leq 1$.
\end{proof}

\subsection{Handling algebraically unconnected interface entities} \label{sec:slab_fix}

As discussed in \cref{sec:conv_analysis_ams}, the application of the algebraic multiscale procedure requires the interface of the non-overlapping domain decomposition to be algebraically connected. This condition is necessary for the blocks $\tilde{A}_{\text{EE}}$ and $\tilde{A}_{\text{FF}}$ obtained after the diagonal modification in \cref{eq:ams_diagonal_blocks} to be non-singular. However, a partition provided as input does not necessarily satisfy this requirement. In this subsection, we describe a modification that allows the AM method to be employed in such cases.

Consider the edge entity in \cref{fig:unconnected_interface} and a diffusion problem in a homogeneous medium. Since the entries of the system matrix coupling the edge nodes to each other and to the adjacent vertices are all zero, the corresponding submatrices in the AM method are
\begin{equation*}
    A_{ee} =
    \begin{bmatrix}
        4 & 0 & 0 \\
        0 & 4 & 0 \\
        0 & 0 & 4
    \end{bmatrix},\,
    A_{ev} =
    \begin{bmatrix}
        0 & 0 \\
        0 & 0 \\
        0 & 0
    \end{bmatrix},
\end{equation*}
where the subscripts $e$ and $v$ denote, respectively, the nodes on the edge in \cref{fig:unconnected_interface} and the vertex nodes at the endpoints of that edge. Since each row of $A$ sums to zero, it follows that $\tilde{A}_{ee} = A_{ee} + \diag \left( A_{e\text{I}} \mathbf{1}_{\text{I}} \right) = \mathbf{0}$. Moreover, since $A_{ev} = \mathbf{0}$, even if $\tilde{A}_{ee}$ were non-singular, the coarse basis functions would vanish on $e$.

\begin{figure}[t]
    \centering
    \begin{subfigure}[b]{0.3\textwidth}
        \includegraphics[width=\textwidth]{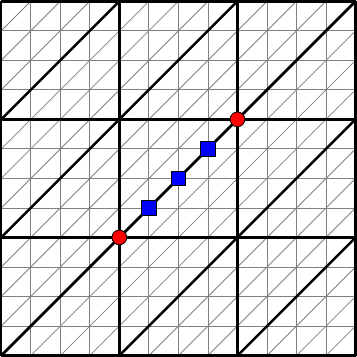}
        \caption{}
        \label{fig:unconnected_interface}
    \end{subfigure}
    \hfil
    \begin{subfigure}[b]{0.3\textwidth}
        \includegraphics[width=\textwidth]{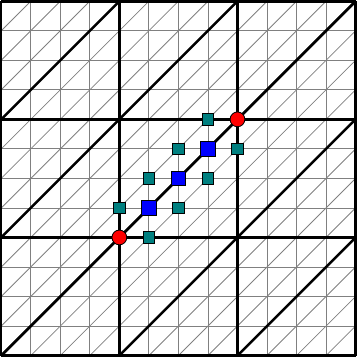}
        \caption{}
        \label{fig:slab_interface}
    \end{subfigure}
    \caption{Illustration of a non-overlapping domain decomposition highlighting an algebraically unconnected edge (a) and the slab nodes used to compute a valid interface partition of unity (b). The vertex, edge, and additional interior nodes are illustrated as red circles, blue squares, and dark green squares, respectively.}
    \label{fig:slab_fix}
\end{figure}

To address this issue, we extend the interface entity by one or more layers of nodes, thus including a slab of neighboring nodes, as illustrated in \cref{fig:slab_interface}. Layers are added until the slab is algebraically connected, which ensures that the submatrices associated with the slab are non-singular. This approach is related to the coarse space described in \cite[subsection~4.2]{HEINLEIN2018MS} based on the ACMS method.

Given the slab region, we solve an algebraic RBC problem analogous to \cref{eq:phi_ams}. For an edge entity $e \subset \Gamma$, we define the corresponding slab region $e'$ and compute the coarse basis functions $\Phi^{\text{AM}}_{e'}$ on $e'$ as
\begin{equation} \label{eq:ams_slab_bf_edge}
    \Phi^{\text{AM}}_{e'} = -\tilde{A}_{e' e'}^{-1} A_{e' \text{V}_e} \Phi^{\text{AM}}_{\text{V}_e}, \quad \text{with } \tilde{A}_{e' e'} = A_{e' e'} +  \diag \left( A_{e' e^{\ast}} \mathbf{1}_{e^{\ast}} \right),
\end{equation}
where $e^{\ast} \coloneq \omega_{e'} \setminus \left( e' \cup \text{V}_e \right)$ with $\text{V}_e$ and $\omega_{e'}$ being the set of vertex nodes that are ancestors of $e$ and the set of nodes in the subdomains that share $e'$, respectively. Note that $e'$ may contain nodes belonging to other interface entities. Since the diagonal modification is performed over $e^{\ast}$ and the extension is computed only from $\text{V}_e$, no spurious coupling is introduced between $e$ and other vertices. Similarly, for a face entity $f \subset \Gamma$, we define the corresponding slab region $f'$ and compute the coarse basis functions $\Phi^{\text{AM}}_{f'}$ on $f'$ as
\begin{equation*}
    \Phi^{\text{AM}}_{f'} = -\tilde{A}_{f' f'}^{-1} \left( A_{f' \text{V}_f} \Phi^{\text{AM}}_{\text{V}_f} + A_{f'\text{E}_f} \Phi^{\text{AM}}_{\text{E}_f} \right), \quad \text{with } \tilde{A}_{f' f'} = A_{f' f'} + \diag \left( A_{f' f^{\ast}} \mathbf{1}_{f^{\ast}} \right),
\end{equation*}
where $f^{\ast} \coloneq \omega_{f'} \setminus \left( f' \cup \text{V}_f \cup \text{E}_f \right)$ with $\text{V}_f$ and $\text{E}_f$ being the sets of vertex and edge nodes that are ancestors of $f$, respectively, and $\omega_{f'}$ the set of nodes in the subdomains that share $f'$. Once the coarse basis functions on the slab are computed, only the values on the initial non-extended entity are kept. Therefore, the original support of the interface values is restored and the extension to the interior can be computed as defined in \cref{eq:phi_ams}. An example of a basis function on a slab edge $e'$ and its restriction to the original edge $e$ is shown in \cref{fig:slab_ipou}.

\begin{figure}[t]
    \centering
    \begin{subfigure}[b]{0.495\textwidth}
        \includegraphics[width=\textwidth]{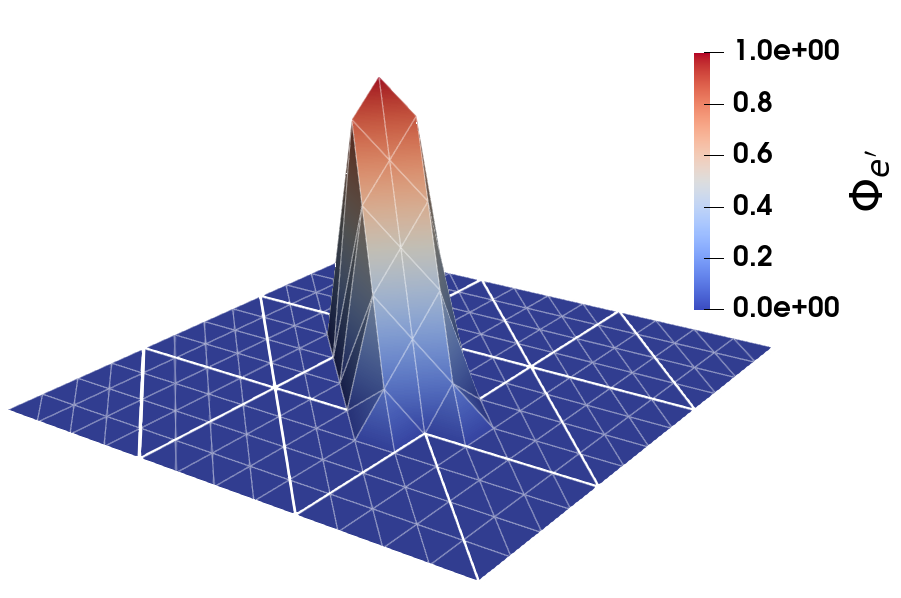}
        \caption{}
        \label{fig:slab_ipou_all}
    \end{subfigure}
    \hfil
    \begin{subfigure}[b]{0.495\textwidth}
        \includegraphics[width=\textwidth]{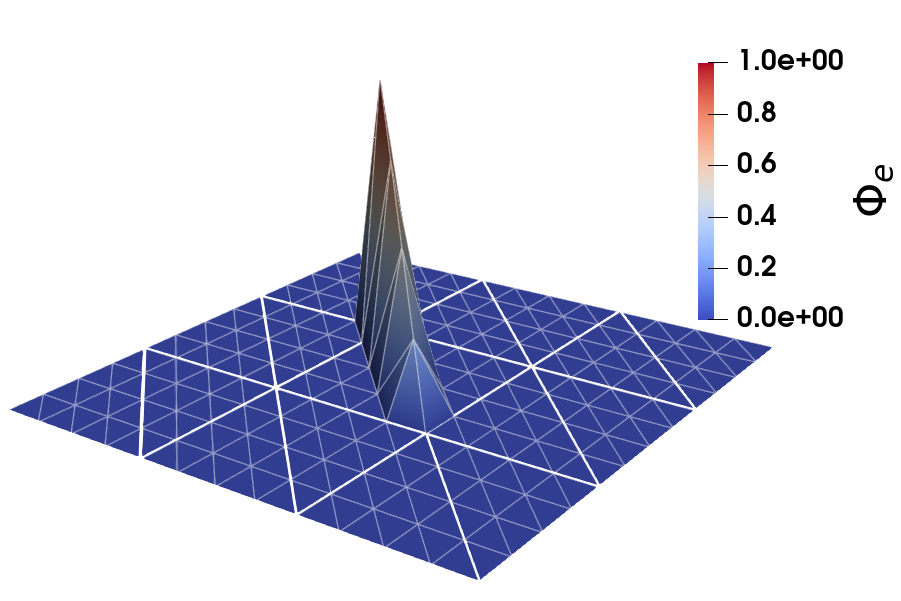}
        \caption{}
        \label{fig:slab_ipou_edge}
    \end{subfigure}
    \caption{Visualization of the edge component of an AM coarse basis function computed on a slab (left) and its restriction to the original entity (right).}
    \label{fig:slab_ipou}
\end{figure}

An additional benefit of solving the RBC problem on the slab entity is that the interface trace values incorporate information from beyond the initial edge or face. Therefore, possible changes in the coefficient that would otherwise not be represented by the original AM functions are captured by the slab.

For completeness, we show that the partition of unity property is preserved on the slab region and, consequently, on the original interface entity.

\begin{lemma} \label{lemma:ams_slab_pou}
    Let $\Gamma_i \subset \Gamma$ be an edge or a face entity, and let $\Gamma_i'$ be the corresponding slab region. The coarse basis functions $\Phi^{\text{AM}}_{\Gamma_i'}$ computed on $\Gamma_i'$ form a partition of unity, i.e., $\Phi^{\text{AM}}_{\Gamma_i'} \mathbf{1}_{\text{V}} = \mathbf{1}_{\Gamma_i'}$. In addition, the restriction $\Phi^{\text{AM}}_{\Gamma_i}$ of $\Phi^{\text{AM}}_{\Gamma_i'}$ to $\Gamma_i$ forms a partition of unity, i.e., $\Phi^{\text{AM}}_{\Gamma_i} \mathbf{1}_{\text{V}} = \mathbf{1}_{\Gamma_i}$.
\end{lemma}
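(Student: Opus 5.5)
We mimic the argument of \cref{lemma:ams_pou}, now on the slab. Consider first an edge entity $e$ with slab $e'$. We have $\Phi^{\text{AM}}_{\text{V}_e} = I$ restricted to the ancestor vertices, so $\Phi^{\text{AM}}_{\text{V}_e}\mathbf{1}_{\text{V}} = \mathbf{1}_{\text{V}_e}$. The claim $\Phi^{\text{AM}}_{e'}\mathbf{1}_{\text{V}} = \mathbf{1}_{e'}$ is then equivalent, via \cref{eq:ams_slab_bf_edge}, to
\begin{equation*}
  \tilde{A}_{e'e'}\mathbf{1}_{e'} + A_{e'\text{V}_e}\mathbf{1}_{\text{V}_e} = \mathbf{0}.
\end{equation*}
Expanding the definition of $\tilde{A}_{e'e'}$ and removing the $\diag$ operator as before gives
\begin{equation*}
  A_{e'e'}\mathbf{1}_{e'} + A_{e'e^{\ast}}\mathbf{1}_{e^{\ast}} + A_{e'\text{V}_e}\mathbf{1}_{\text{V}_e}.
\end{equation*}
Since $e' \cup e^{\ast} \cup \text{V}_e = \omega_{e'}$ is a disjoint union, this is the row sum of $A$ over the columns in $\omega_{e'}$.

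The key step is to argue that every nonzero entry of a row indexed by $e'$ lies in a column in $\omega_{e'}$. This holds because $\omega_{e'}$ is the set of nodes in all subdomains sharing $e'$, and any fine neighbour of a slab node belongs to one of those subdomains (or is a Dirichlet node). Hence the expression equals the full row sum of $A$, which vanishes for non-Dirichlet rows of a piecewise linear discretization. This is exactly the argument of \cref{lemma:ams_pou}, with \cref{re:ams_pou} covering rows adjacent to the Dirichlet boundary. Invertibility of $\tilde{A}_{e'e'}$ is guaranteed by the construction of the slab (layers are added until it is algebraically connected), so the solution is unique and equals $\mathbf{1}_{e'}$.

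For a face $f$ with slab $f'$, we argue in the same way, using that the edge values already sum to one, $\Phi^{\text{AM}}_{\text{E}_f}\mathbf{1}_{\text{V}} = \mathbf{1}_{\text{E}_f}$, by the edge case just proved (restricted to the original edges, see below). The residual
\begin{equation*}
  \tilde{A}_{f'f'}\mathbf{1}_{f'} + A_{f'\text{V}_f}\mathbf{1}_{\text{V}_f} + A_{f'\text{E}_f}\mathbf{1}_{\text{E}_f}
\end{equation*}
then becomes the row sum over $\omega_{f'} = f' \cup f^{\ast} \cup \text{V}_f \cup \text{E}_f$, which is zero by the same support argument. The second claim follows trivially: the restriction $R_{\Gamma_i}$ of the slab values satisfies $R_{\Gamma_i}\mathbf{1}_{\Gamma_i'} = \mathbf{1}_{\Gamma_i}$ because $\Gamma_i \subset \Gamma_i'$. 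The only delicate point, which I expect to be the main obstacle, is verifying that $\omega_{\Gamma_i'}$ really contains all matrix neighbours of slab nodes and that the partition $e' \cup e^{\ast} \cup \text{V}_e$ is disjoint and exhaustive. In particular, $e'$ may contain nodes of other interface entities and vertices, so we must check that vertices in $e'$ other than $\text{V}_e$ are accounted for in $e'$ or in $e^{\ast}$ and not double-counted.
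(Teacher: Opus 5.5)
Your proposal is correct and follows essentially the same route as the paper. You reduce the claim to the vanishing of $\tilde{A}_{e'e'}\mathbf{1}_{e'} + A_{e'\text{V}_e}\mathbf{1}_{\text{V}_e}$ (and of its face analogue, using $\Phi^{\text{AM}}_{\text{E}_f}\mathbf{1}_{\text{V}} = \mathbf{1}_{\text{E}_f}$), identify it as a row sum of $A$ that vanishes away from the Dirichlet boundary, and get the second claim by restricting rows to $\Gamma_i \subset \Gamma_i'$, exactly as the paper does; the points you flag as delicate (that $\omega_{e'}$, resp.\ $\omega_{f'}$, contains every nonzero column of the slab rows, that $e'$, $e^{\ast}$ and $\text{V}_e$ are pairwise disjoint, and that $\tilde{A}_{e'e'}$ is invertible by the slab construction) are what the paper's step equating the sum with the full row sum assumes without comment, so stating them makes the same argument more explicit.
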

\begin{proof}
    As in the proof of \cref{lemma:ams_pou}, it suffices to verify that the row sums of the submatrices coupling the slab region to the remaining nodes vanish. For a slab edge $e'$, the definition of $\tilde{A}_{e' e'}$ in \cref{eq:ams_slab_bf_edge} yields
    \begin{align*}
        \tilde{A}_{e' e'} \mathbf{1}_{e'} + A_{e' \text{V}_e} \mathbf{1}_{\text{V}_e}
        &= A_{e' e'} \mathbf{1}_{e'} + A_{e' e^{\ast}} \mathbf{1}_{e^{\ast}} + A_{e' \text{V}_e} \mathbf{1}_{\text{V}_e} \\
        &= \begin{bmatrix}
            A_{e' e^{\ast}} & A_{e' e'} & A_{e' \text{V}_{e}}
        \end{bmatrix} \mathbf{1} \\
        &= \mathbf{0},
    \end{align*}
    where the second equality expresses the row sums of $A$ over the rows associated with $e'$, which vanish for piecewise linear finite elements away from the Dirichlet boundary; cf. \cref{re:ams_pou}. Analogously, for a face slab $f'$, using $\Phi^{\text{AM}}_{\text{E}_f} \mathbf{1}_{\text{V}} = \mathbf{1}_{\text{E}_f}$, one obtains
    \begin{align*}
        \tilde{A}_{f' f'} \mathbf{1}_{f'} + A_{f' \text{E}_f} \mathbf{1}_{\text{E}_f} + A_{f' \text{V}_f} \mathbf{1}_{\text{V}_f}
        &= A_{f' f'} \mathbf{1}_{f'} + A_{f' f^{\ast}} \mathbf{1}_{f^{\ast}} + A_{f' \text{E}_f} \mathbf{1}_{\text{E}_f} + A_{f' \text{V}_f} \mathbf{1}_{\text{V}_f} \\
        &= \begin{bmatrix}
            A_{f' f^{\ast}} & A_{f' f} & A_{f' \text{E}_{f}} & A_{f' \text{V}_{f}}
        \end{bmatrix} \mathbf{1} \\
        &= \mathbf{0}.
    \end{align*}
    Finally, since $\Gamma_i \subset \Gamma_i'$, partitioning the rows of $\Phi^{\text{AM}}_{\Gamma_i'}$ into those associated with $\Gamma_i$ and $\Gamma_i' \setminus \Gamma_i$ yields
    \begin{align*}
        \Phi^{\text{AM}}_{\Gamma_i'} \mathbf{1}_{\text{V}} &= \mathbf{1}_{\Gamma_i'} \\
        \implies
        \begin{bmatrix}
            \Phi^{\text{AM}}_{\Gamma_i} \\
            \Phi^{\text{AM}}_{\Gamma_i' \setminus \Gamma_i}
        \end{bmatrix} \mathbf{1}_{\text{V}}
        &=
        \begin{bmatrix}
            \mathbf{1}_{\Gamma_i} \\
            \mathbf{1}_{\Gamma_i' \setminus \Gamma_i}
        \end{bmatrix} \\
        \implies
        \Phi^{\text{AM}}_{\Gamma_i} \mathbf{1}_{\text{V}} &= \mathbf{1}_{\Gamma_i}.
    \end{align*}
\end{proof}

\begin{remark}
    If a suitable algebraically connected slab cannot be found within a given number of layers of nodes, the interface values on the unconnected entity may instead be prescribed by piecewise linear functions or by a purely algebraic alternative, such as the IPOU of RGDSW option 1 in \cref{sec:rgdsw_coarse_space}. We adopt the latter in our implementation to preserve the algebraic nature of the framework.
\end{remark}

\subsection{Framework for algebraic multiscale coarse spaces for elliptic problems} \label{sec:ams_rgdsw_framework}

The procedure described in \cref{sec:ams_basis_functions} was devised for diffusion problems. Its application to the linear elasticity equations requires the careful definition of the RBC problems to handle the components of the nodal dofs \cite{BUCK2013MSFEMLE}. Moreover, changes to \cref{eq:msfem_bf_def} directly impact the interpretation of the reduced diagonal blocks $\tilde{A}_{\text{EE}}$ and $\tilde{A}_{\text{FF}}$; cf. \cite{CASTELLETTO2017AMSLE, SOKOLOVA2019AMSPOROELASTICITY}.

The algebraic multiscale procedure may nonetheless be used to compute an interface partition of unity (IPOU) function that can be employed as part of the RGDSW framework outlined in \cref{sec:rgdsw_coarse_space}. This IPOU function incorporates information about the changes in the material, thus improving the robustness of the coarse space for heterogeneous problems. In addition, by definition of the RGDSW method, the resulting coarse space is guaranteed to contain the null space.

To state the common AM-RGDSW framework, we start by rewriting the AM coarse basis functions for the scalar diffusion problem in terms of the energy-minimizing extension in \cref{eq:energy_min_ext}. Let $\Gamma = \text{V} \cup \text{E} \cup \text{F}$ be the set of interface dofs (in 2D, $\Gamma = \text{E} \cup \text{V}$). From \cref{eq:phi_ams},
\begin{align*}
    \Phi^{\text{AM}}_{\text{I}} &= -A_{\text{II}}^{-1} \left( A_{\text{IV}} \Phi^{\text{AM}}_{\text{V}} + A_{\text{IE}} \Phi^{\text{AM}}_{\text{E}} + A_{\text{IF}} \Phi^{\text{AM}}_{\text{F}} \right) \\
    &= -A_{\text{II}}^{-1}
    \begin{bmatrix}
        A_{\text{IF}} & A_{\text{IE}} & A_{\text{IV}}
    \end{bmatrix}
    \begin{bmatrix}
        \Phi^{\text{AM}}_{\text{F}} \\
        \Phi^{\text{AM}}_{\text{E}} \\
        \Phi^{\text{AM}}_{\text{V}}
    \end{bmatrix}.
\end{align*}
Denoting
\begin{equation*}
    A_{\text{I}\Gamma} =
    \begin{bmatrix}
        A_{\text{IF}} & A_{\text{IE}} & A_{\text{IV}}
    \end{bmatrix}, \,
    \Phi_{\Gamma}^{\text{AM}} = \begin{bmatrix}
        \Phi^{\text{AM}}_{\text{F}} \\
        \Phi^{\text{AM}}_{\text{E}} \\
        \Phi^{\text{AM}}_{\text{V}}
    \end{bmatrix},
\end{equation*}
we may write the AM coarse operator as
\begin{equation*}
    \Phi^{\text{AM}} =
    \begin{bmatrix}
        \Phi_{\text{I}}^{\text{AM}} \\
        \Phi_{\Gamma}^{\text{AM}}
    \end{bmatrix} =
    \begin{bmatrix}
        -A_{\text{II}}^{-1} A_{\text{I}\Gamma} \\
        I_{\Gamma\Gamma}
    \end{bmatrix} \Phi_{\Gamma}^{\text{AM}}.
\end{equation*}

For linear elasticity, we need to reformulate the interface component $\Phi_{\Gamma}^{\text{AM}}$ as the product of an IPOU function and the null space. Let the system matrix be partitioned in terms of the dof components such that
\begin{equation*}
    A =
    \begin{bmatrix}
        A^{(1, 1)} & A^{(1, 2)} & A^{(1, 3)} \\
        A^{(2, 1)} & A^{(2, 2)} & A^{(2, 3)} \\
        A^{(3, 1)} & A^{(3, 2)} & A^{(3, 3)}
    \end{bmatrix},
\end{equation*}
where the blocks $A^{(i, j)}$ correspond to the relation between the dof components $i$ and $j$. In the case of isotropic materials, the diagonal blocks $A^{(i, i)}$ have the same sparsity pattern and depend on the Lamé parameters in the same way. We may select one of them to apply the AM procedure to compute a nodal IPOU function.

Since the elimination procedure in \cref{eq:ams_diagonal_blocks} does not ensure that the terms related to the other dof components are subtracted from the diagonal, we need to enforce the partition of unity by normalization. Let the algebraic multiscale interface function computed using one of the $A^{(i, i)}$ in place of the global system matrix be given by
\begin{equation} \label{eq:ams_ipou}
    \Psi^{\text{AM}}_{\Gamma} =
    \begin{bmatrix}
        \Psi^{\text{AM}}_{\text{F}} \\
        \Psi^{\text{AM}}_{\text{E}} \\
        \Psi^{\text{AM}}_{\text{V}}
    \end{bmatrix} =
    \begin{bmatrix}
        -\tilde{A}^{(i, i)^{-1}}_{\text{FF}} \left( A^{(i, i)}_{\text{FV}} \Psi^{\text{AM}}_{\text{V}} + A^{(i, i)}_{\text{FE}} \Psi^{\text{AM}}_{\text{E}} \right) \\
        -\tilde{A}^{(i, i)^{-1}}_{\text{EE}} A^{(i, i)}_{\text{EV}} \Psi^{\text{AM}}_{\text{V}} \\
        I_{\text{VV}}
    \end{bmatrix}.
\end{equation}
To guarantee that a partition of unity is obtained, we normalize the edge and face values after computation. The algebraic multiscale IPOU function is given by
\begin{equation} \label{eq:ams_ipou_fixed}
    \Psi^{\text{AM}}_{\Gamma} =
    \begin{bmatrix}
        \hat{\Psi}^{\text{AM}}_{\text{F}} \\
        \hat{\Psi}^{\text{AM}}_{\text{E}} \\
        \Psi^{\text{AM}}_{\text{V}}
    \end{bmatrix} =
    \begin{bmatrix}
        N_{\text{F}} \overline{\Psi}^{\text{AM}}_{\text{F}} \\
        N_{\text{E}} \Psi^{\text{AM}}_{\text{E}} \\
        I_{\text{VV}}
    \end{bmatrix},
\end{equation}
where $\overline{\Psi}^{\text{AM}}_{\text{F}}$ is computed as in \cref{eq:ams_ipou} by replacing $\Psi^{\text{AM}}_{\text{E}}$ by $\hat{\Psi}^{\text{AM}}_{\text{E}}$, and $N_{\text{F}}$ and $N_{\text{E}}$ are diagonal scaling matrices defined as
\begin{equation*}
    \begin{aligned}
        N_{\text{F}} & \coloneq \diag{\left( \overline{\Psi}^{\text{AM}}_{\text{F}} \mathbf{1}_{\text{V}} \right)}^{-1}, \\
        N_{\text{E}} & \coloneq \diag{\left( \Psi^{\text{AM}}_{\text{E}} \mathbf{1}_{\text{V}} \right)}^{-1}.
    \end{aligned}
\end{equation*}
Finally, to assemble the interface operator $\Phi_{\Gamma}$, we proceed as in \cref{eq:rgdsw_int_bf} by employing \cref{eq:ams_ipou_fixed} as the IPOU function.

\begin{remark}
    Although the formulation of the IPOU function introduced in this subsection is purely algebraic, the null space of the linear elasticity equations requires geometric information to compute the rotation modes. Nevertheless, these modes may be omitted for a fully algebraic framework, as discussed in \cite{HEINLEINHOCHMUTHKLAWONN2021ALGEBRAICLE} for the RGDSW coarse space. We investigate the impact of removing the rotation modes from the null space in \cref{sec:numerical_results}.
\end{remark}

\section{Numerical results} \label{sec:numerical_results}

In this section, we present numerical results on the robustness and scalability of the AM coarse space for scalar diffusion and linear elasticity problems. We compare the numerical performance of the AM coarse space to the GDSW and RGDSW option 1 algebraic coarse spaces. In all cases, unless stated otherwise, we have used P1 finite elements to discretize the problem and the resulting system of equations was solved using the preconditioned conjugate gradient (PCG) method with a relative stopping criterion $|| r^{(k)} ||_2 / || r^{(0)} ||_2 < 10^{-6}$, where $r^{(0)}$ and $r^{(k)}$ are the initial and $k$-th unpreconditioned residuals, respectively. For the Schwarz method, unless otherwise noted, we employ structured quadrangular and hexahedral domain decompositions in 2D and 3D, respectively, with subdomain, grid, and overlap sizes denoted by $H$, $h$, and $\delta$, respectively. Moreover, the overlapping region is defined algebraically by inspecting the node connectivity in the stiffness matrix.

All methods discussed in this section were implemented in Python using the libraries NumPy \cite{NUMPY2020} version $2.1.1$ and SciPy \cite{SCIPY2020} version $1.14.1$. In addition, the geometric modeling, mesh generation and assembly of the finite element problem were done with NGSolve \cite{NGSOLVEREPORT2014, NGSOLVEPROJECT} version $6.2.2506$.

\subsection{Diffusion: 2D high coefficient inclusions} \label{sec:results_diffusion_high_coeff}

\begin{figure}[t]
  \centering
  \begin{subfigure}[b]{0.495\textwidth}
      \centering
      \includegraphics[width=0.7\textwidth]{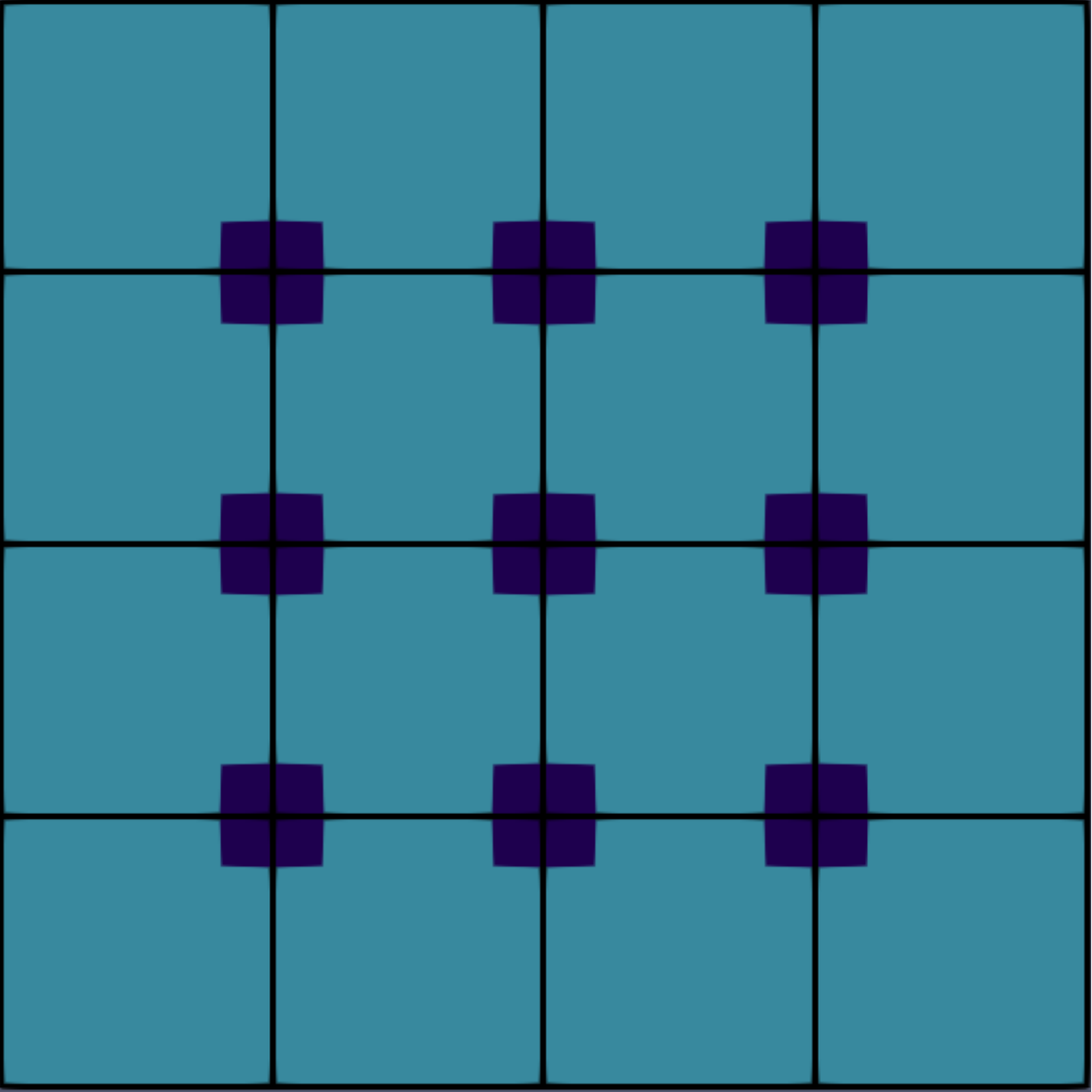}
      \caption{}
      \label{fig:vertex_inclusions}
  \end{subfigure}
  \hfill
  \begin{subfigure}[b]{0.495\textwidth}
      \centering
      \includegraphics[width=0.7\textwidth]{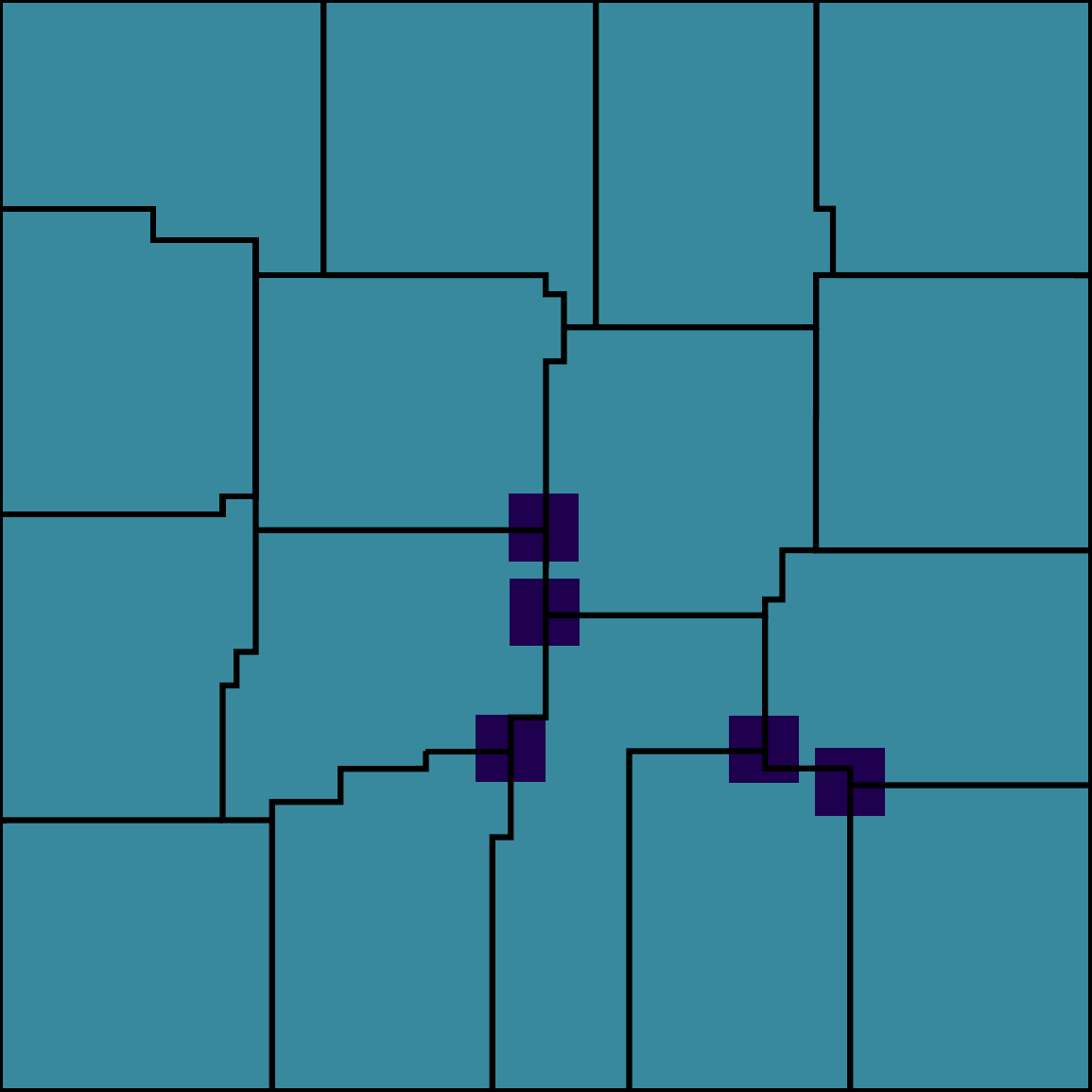}
      \caption{}
      \label{fig:vertex_inclusions_unstruct}
  \end{subfigure}
  \hfill
  \begin{subfigure}[b]{0.495\textwidth}
      \centering
      \includegraphics[width=0.7\textwidth]{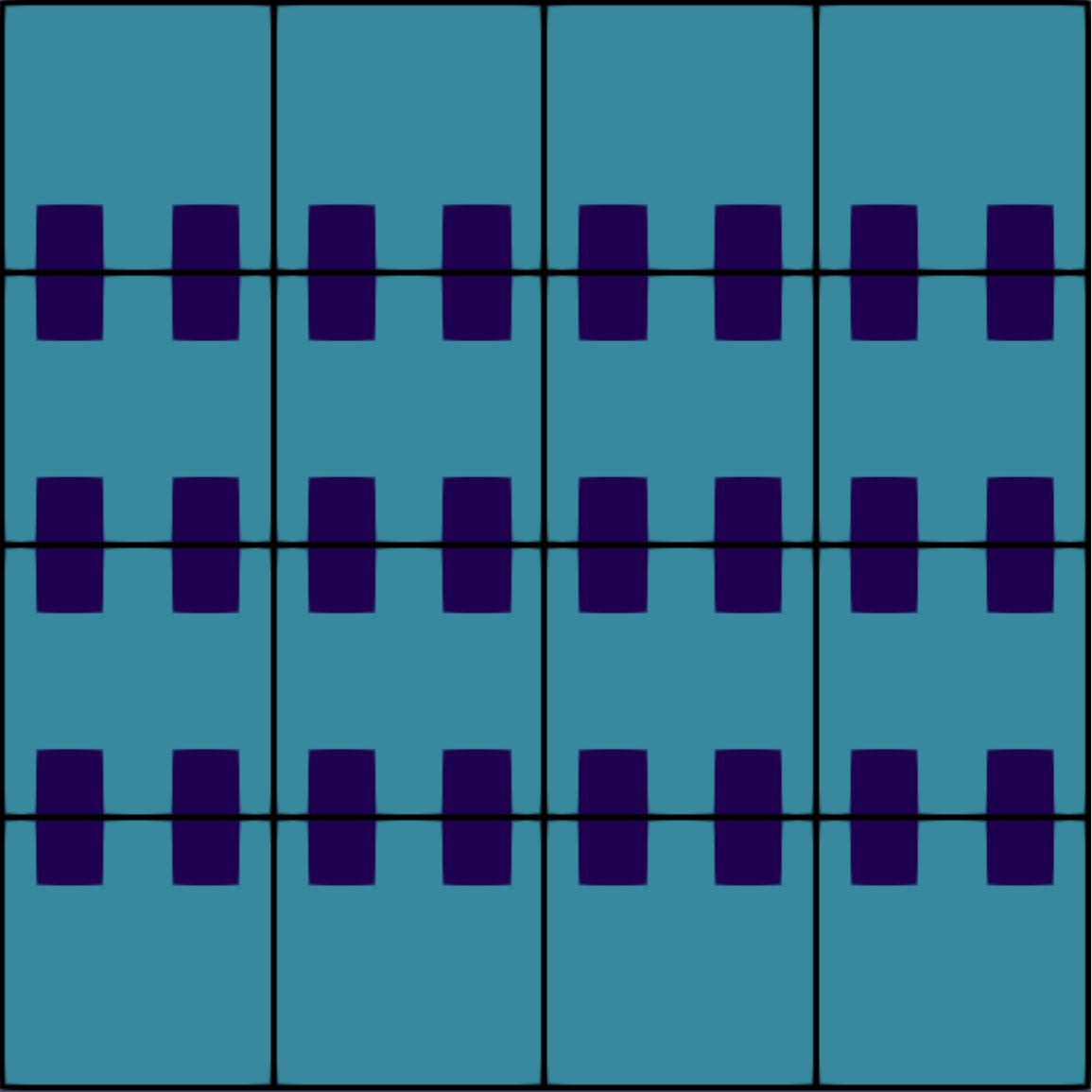}
      \caption{}
      \label{fig:edge_inclusions}
  \end{subfigure}
  \hfill
  \begin{subfigure}[b]{0.495\textwidth}
      \centering
      \includegraphics[width=0.7\textwidth]{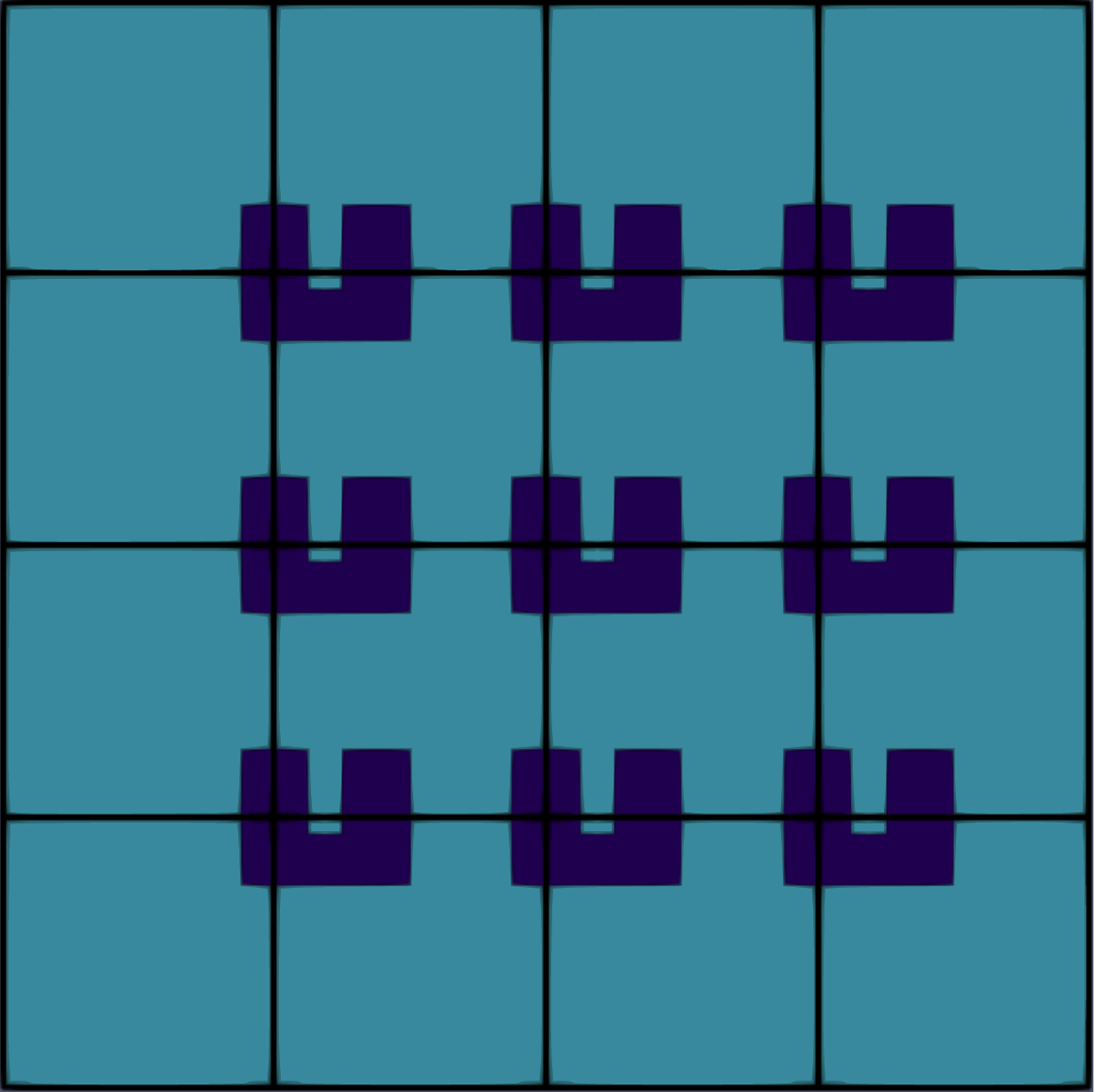}
      \caption{}
      \label{fig:u_shaped_inclusions}
  \end{subfigure}
  \caption{Coefficient distribution with high coefficient inclusions used in the examples. In the darker areas, $\alpha (x) = \alpha_{\text{max}} \geq 1$ while $\alpha (x) = 1$ elsewhere. The black grid lines correspond to the non-overlapping domain decomposition. For the structured partitions in (a), (c) and (d), $1/H = 4$ and $H/h = 16$.}
  \label{fig:high_coeff_inclusions}
\end{figure}

As a first example, we consider the heterogeneous scalar diffusion problem with coefficient distributions containing high coefficient inclusions, as shown in \cref{fig:high_coeff_inclusions}. A constant forcing term $f = 1$ is chosen for all cases.

The weak scalability results for the coefficient distribution in \cref{fig:vertex_inclusions} with $\alpha_{\text{max}} = 10^6$ are shown in \cref{fig:diffusion_vertex_inclusions_weak_scalability} for $H/h = 16$ and two layers of algebraic overlap. The AM coarse space is robust with respect to the contrast while the other options are not. In \cref{fig:diffusion_vertex_inclusions_weak_scalability_cond_num}, it can be seen that the condition number for GDSW and RGDSW is of the same order of magnitude as the contrast, whereas AM is independent of it. The same is observed for unstructured subdomains, as in \cref{fig:vertex_inclusions_unstruct}. The iteration count and condition number are shown in \cref{fig:diffusion_vertex_inclusions_weak_scalability_unstruct}. In this case, the high coefficient inclusions are added to coarse nodes that are far enough apart such that the inclusions do not cover an entire edge of the subdomain. This is done so that the assumptions in \cref{sec:conv_analysis_ams} are satisfied.

The effect of the contrast is further detailed in \cref{tab:diffusion_high_coeff_contrast_study} and \cref{tab:diffusion_high_coeff_contrast_study_unstruct}, where the problem is solved for increasing $\alpha_{\text{max}}$, the value of the coefficient function in the inclusions. For the AM coarse space, the iteration count, condition number and coarse space robustness indicator remain roughly constant. This is in line with the theory discussed in \cref{sec:conv_analysis_ams}. From \cref{eq:cond_num_bound_msfem}, for fixed values of $H$, $h$, and $\delta$, the upper bound on the condition number depends only on $\gamma(\alpha)$. In addition, since the coefficient distribution satisfies the assumptions stipulated in \cref{sec:conv_analysis_ams}, the coarse space robustness indicator can be bounded independently of the coefficient contrast. Therefore, for a fixed ratio $H/\delta$, the condition number is bounded from above by a constant.

\begin{figure}[t]
    \centering
    \begin{subfigure}[b]{0.495\textwidth}
        \includegraphics[width=\textwidth]{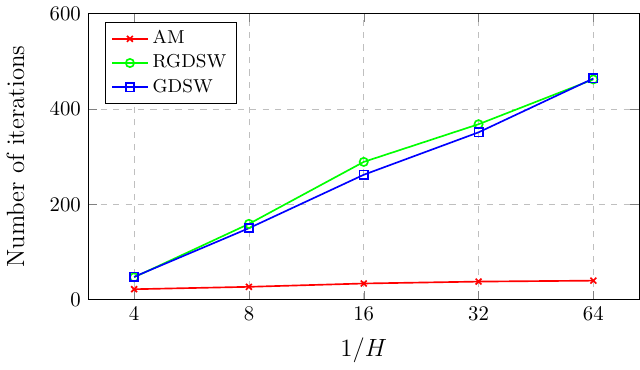}
        \caption{}
        \label{fig:diffusion_vertex_inclusions_weak_scalability_iter}
    \end{subfigure}
    \hfil
    \begin{subfigure}[b]{0.495\textwidth}
        \includegraphics[width=\textwidth]{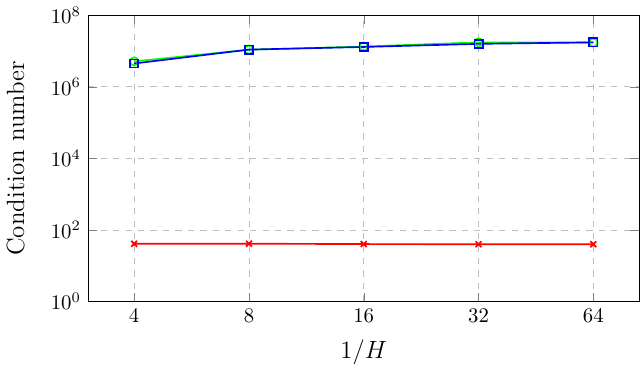}
        \caption{}
        \label{fig:diffusion_vertex_inclusions_weak_scalability_cond_num}
    \end{subfigure}
    \caption{Numerical scalability of the AM coarse space compared to the GDSW and RGDSW methods for the coefficient distribution in \cref{fig:vertex_inclusions}. Results for $H/h = 16$ and $\delta = 2h$. \textbf{Left:} number of iterations; \textbf{right:} condition number estimate.}
    \label{fig:diffusion_vertex_inclusions_weak_scalability}
\end{figure}

\begin{figure}[t]
    \centering
    \begin{subfigure}[b]{0.495\textwidth}
        \includegraphics[width=\textwidth]{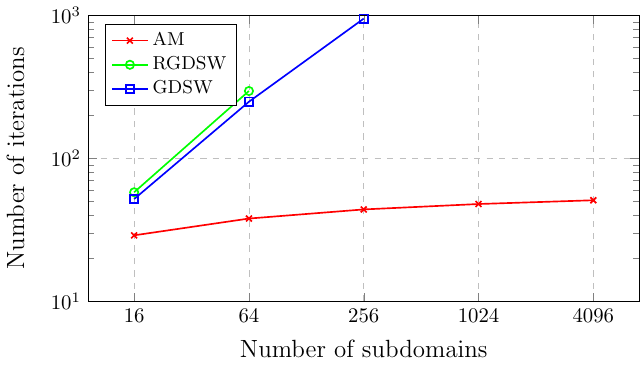}
        \caption{}
        \label{fig:diffusion_vertex_inclusions_weak_scalability_unstruct_iter}
    \end{subfigure}
    \hfil
    \begin{subfigure}[b]{0.495\textwidth}
        \includegraphics[width=\textwidth]{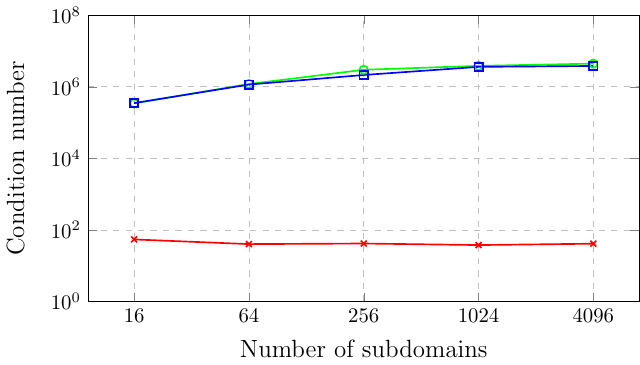}
        \caption{}
        \label{fig:diffusion_vertex_inclusions_weak_scalability_unstruct_cond_num}
    \end{subfigure}
    \caption{Numerical scalability of the AM coarse space compared to the GDSW and RGDSW methods for the coefficient distribution in \cref{fig:vertex_inclusions_unstruct} on unstructured domain decompositions. Results for $\delta = 2h$. \textbf{Left:} number of iterations; \textbf{right:} condition number estimate. GDSW and RGDSW did not converge within 1000 iterations for more than 128 and 64 subdomains, respectively.}
    \label{fig:diffusion_vertex_inclusions_weak_scalability_unstruct}
\end{figure}

\begin{table}[t]
  \caption{Number of iterations (it.), condition number estimate ($\kappa$), and coarse space robustness indicator ($\gamma(\alpha)$) for each algebraic coarse space option and different coefficient contrasts in the coefficient distribution in \cref{fig:vertex_inclusions}. Here, $1/H = 8$, $1/h = 128$, and $\delta = 2h$.}
  \label{tab:diffusion_high_coeff_contrast_study}

  \footnotesize
  \begin{subtable}{0.475\linewidth}
    \begin{tabular*}{\textwidth}{@{\extracolsep{\fill}}
      >{\RaggedRight}p{2cm} 
      *{3}{c} }
      Coarse space & it. & $\kappa$ & $\gamma(\alpha)$ \\
      \midrule
      AM & 22 & 20.35 & 11.77 \\
      RGDSW & 28 & 32.04 & 13.81 \\
      GDSW & 25 & 20.68 & 9.77 \\
    \end{tabular*}
    \caption{$\alpha_{\text{max}} = 1$}
  \end{subtable}
  \hfill
  \begin{subtable}{0.475\linewidth}
    \begin{tabular*}{\textwidth}{@{\extracolsep{\fill}}
      >{\RaggedRight}p{2cm} 
      *{3}{c} }
      Coarse space & it. & $\kappa$ & $\gamma(\alpha)$ \\
      \midrule
      AM & 26 & 40.51 & 12.63 \\
      RGDSW & 66 & $1.19 \cdot 10^3$ & $3.93 \cdot 10^2$ \\
      GDSW & 65 & $1.15 \cdot 10^3$ & $5.84 \cdot 10^2$ \\
    \end{tabular*}
    \caption{$\alpha_{\text{max}} = 10^2$}
  \end{subtable}
  \hfill
  \begin{subtable}{0.475\linewidth}
    \begin{tabular*}{\textwidth}{@{\extracolsep{\fill}}
      >{\RaggedRight}p{2cm} 
      *{3}{c} }
      Coarse space & it. & $\kappa$ & $\gamma(\alpha)$ \\
      \midrule
      AM & 27 & 43.92 & 12.65 \\
      RGDSW & 110 & $1.08 \cdot 10^5$ & $3.90 \cdot 10^4$ \\
      GDSW & 108 & $9.76 \cdot 10^4$ & $5.80 \cdot 10^4$ \\
    \end{tabular*}
    \caption{$\alpha_{\text{max}} = 10^4$}
  \end{subtable}
  \hfill
  \begin{subtable}{0.475\linewidth}
    \begin{tabular*}{\textwidth}{@{\extracolsep{\fill}}
      >{\RaggedRight}p{2cm} 
      *{3}{c} }
      Coarse space & it. & $\kappa$ & $\gamma(\alpha)$ \\
      \midrule
      AM & 27 & 41.23 & 12.65 \\
      RGDSW & 156 & $1.10 \cdot 10^7$ & $3.90 \cdot 10^6$ \\
      GDSW & 150 & $1.06 \cdot 10^7$ & $5.80 \cdot 10^6$ \\
    \end{tabular*}
    \caption{$\alpha_{\text{max}} = 10^6$}
  \end{subtable}
\end{table}

\begin{table}[t]
  \caption{Number of iterations (it.), condition number estimate ($\kappa$), and coarse space robustness indicator ($\gamma(\alpha)$) for each algebraic coarse space option and different coefficient contrasts in the coefficient distribution in \cref{fig:vertex_inclusions_unstruct}. Here, $1/h = 128$, $\delta = 2h$, and 128 subdomains are employed.}
  \label{tab:diffusion_high_coeff_contrast_study_unstruct}

  \footnotesize
  \begin{subtable}{0.475\linewidth}
    \begin{tabular*}{\textwidth}{@{\extracolsep{\fill}}
      >{\RaggedRight}p{2cm} 
      *{3}{c} }
      Coarse space & it. & $\kappa$ & $\gamma(\alpha)$ \\
      \midrule
      AM & 27 & 17.64 & 13.28 \\
      RGDSW & 31 & 22.13 & 13.70 \\
      GDSW & 30 & 19.76 & 11.21 \\
    \end{tabular*}
    \caption{$\alpha_{\text{max}} = 1$}
  \end{subtable}
  \hfill
  \begin{subtable}{0.475\linewidth}
    \begin{tabular*}{\textwidth}{@{\extracolsep{\fill}}
      >{\RaggedRight}p{2cm} 
      *{3}{c} }
      Coarse space & it. & $\kappa$ & $\gamma(\alpha)$ \\
      \midrule
      AM & 35 & 28.91 & 13.98 \\
      RGDSW & 74 & $1.15 \cdot 10^2$ & $2.70 \cdot 10^2$ \\
      GDSW & 69 & $1.09 \cdot 10^2$ & $4.83 \cdot 10^2$ \\
    \end{tabular*}
    \caption{$\alpha_{\text{max}} = 10^2$}
  \end{subtable}
  \hfill
  \begin{subtable}{0.475\linewidth}
    \begin{tabular*}{\textwidth}{@{\extracolsep{\fill}}
      >{\RaggedRight}p{2cm} 
      *{3}{c} }
      Coarse space & it. & $\kappa$ & $\gamma(\alpha)$ \\
      \midrule
      AM & 38 & 40.03 & 14.09 \\
      RGDSW & 186 & $1.20 \cdot 10^4$ & $2.67 \cdot 10^4$ \\
      GDSW & 164 & $1.05 \cdot 10^4$ & $4.79 \cdot 10^4$ \\
    \end{tabular*}
    \caption{$\alpha_{\text{max}} = 10^4$}
  \end{subtable}
  \hfill
  \begin{subtable}{0.475\linewidth}
    \begin{tabular*}{\textwidth}{@{\extracolsep{\fill}}
      >{\RaggedRight}p{2cm} 
      *{3}{c} }
      Coarse space & it. & $\kappa$ & $\gamma(\alpha)$ \\
      \midrule
      AM & 38 & 40.20 & 14.10 \\
      RGDSW & 296 & $1.20 \cdot 10^6$ & $2.67 \cdot 10^6$ \\
      GDSW & 249 & $1.16 \cdot 10^6$ & $4.79 \cdot 10^6$ \\
    \end{tabular*}
    \caption{$\alpha_{\text{max}} = 10^6$}
  \end{subtable}
\end{table}

We consider now the coefficient distribution in \cref{fig:edge_inclusions} with $\alpha_{\text{max}} = 10^6$. The weak scalability results are shown in \cref{fig:diffusion_edge_inclusions_weak_scalability}. For this distribution, none of the coarse space options investigated are fully robust with respect to the contrast. In all cases, the number of iterations increases as more subdomains are employed, and the condition number is of the same order of magnitude as the contrast. However, the increase in the number of iterations of AM and GDSW is not as sharp as that of RGDSW. By inspecting the spectra of eigenvalues in \cref{fig:diffusion_edge_inclusions_eigval_spectra}, it can be noticed that, for AM and GDSW, the eigenvalues are grouped in two clusters. For RGDSW, the cluster around 1 tends to spread. This relates to its worse iterative performance, since the CG method can resolve the highly clustered eigenvalues well; see \cite{SOLIMAN2026PCGBOUND} for a discussion on the iterative performance of PCG for clustered spectra.

\begin{figure}[t]
    \centering
    \begin{subfigure}[b]{0.495\textwidth}
        \includegraphics[width=\textwidth]{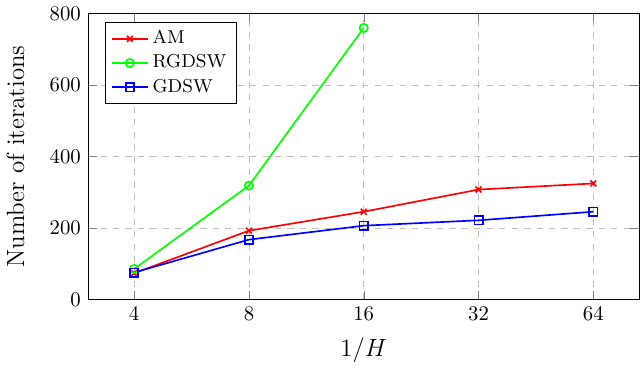}
        \caption{}
        \label{fig:diffusion_edge_inclusions_weak_scalability_iter}
    \end{subfigure}
    \hfil
    \begin{subfigure}[b]{0.495\textwidth}
        \includegraphics[width=\textwidth]{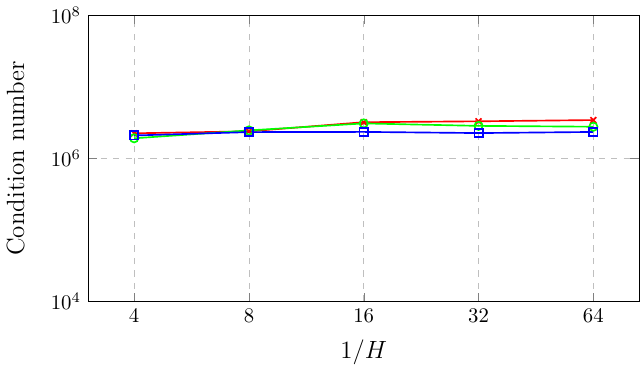}
        \caption{}
        \label{fig:diffusion_edge_inclusions_weak_scalability_cond_num}
    \end{subfigure}
    \caption{Numerical scalability of the AM coarse space compared to the GDSW and RGDSW methods for the coefficient distribution in \cref{fig:edge_inclusions}. Results for $H/h = 16$ and $\delta = 2h$. For $1/H > 16$, RGDSW did not converge within 1000 iterations. \textbf{Left:} number of iterations; \textbf{right:} condition number estimate.}
    \label{fig:diffusion_edge_inclusions_weak_scalability}
\end{figure}

\begin{figure}[t]
  \centering
  \begin{subfigure}[b]{0.495\textwidth}
    \includegraphics[width=\textwidth]{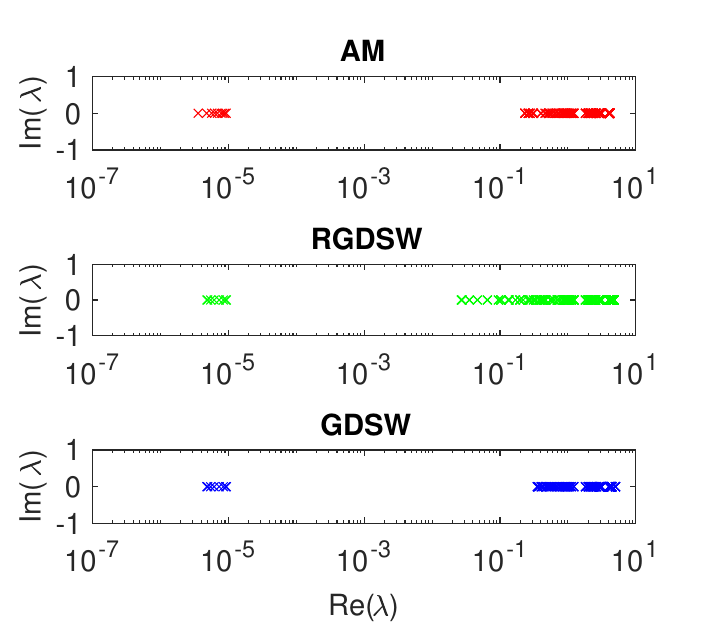}
    \caption{$1/H = 8$}
    \label{fig:diffusion_edge_inclusions_eigval_spectra_8x8}
  \end{subfigure}
  \hfil
  \begin{subfigure}[b]{0.495\textwidth}
    \includegraphics[width=\textwidth]{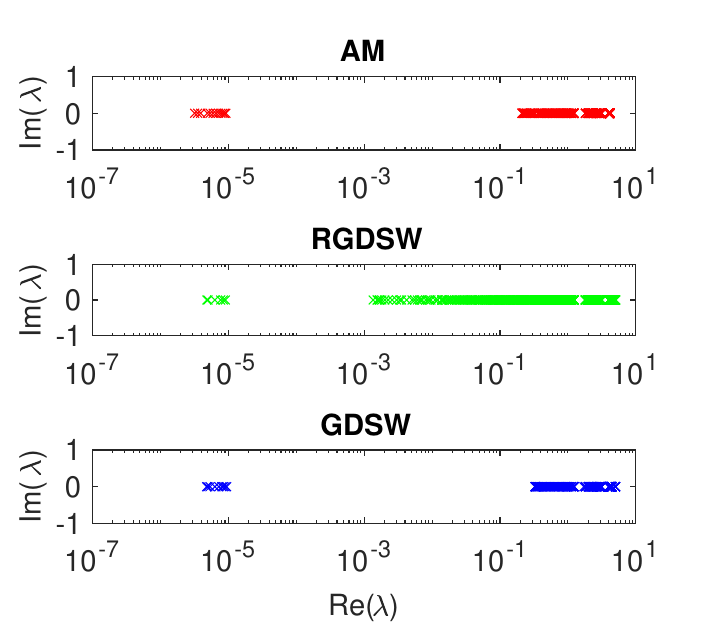}
    \caption{$1/H = 32$}
    \label{fig:diffusion_edge_inclusions_eigval_spectra_32x32}
  \end{subfigure}
  \caption{Spectra of eigenvalues of the Schwarz operator with each algebraic coarse space option for the coefficient distribution in \cref{fig:edge_inclusions} applied to the diffusion problem. The eigenvalues are computed from the coefficients of the Lanczos algorithm associated with the CG iterations. \textbf{Left:} $1/H = 8$; \textbf{right:} $1/H = 32$.}
  \label{fig:diffusion_edge_inclusions_eigval_spectra}
\end{figure}

Finally, to illustrate the impact of the slab entities on the robustness of the AM coarse space, we consider the coefficient distribution in \cref{fig:u_shaped_inclusions} again with $\alpha_{\text{max}} = 10^6$. The weak scalability results are shown in \cref{fig:diffusion_u_shaped_inclusions_weak_scalability}. Each edge entity was extended by one layer of nodes. For this distribution, the AM coarse space without the slab entities is not robust with respect to the contrast. In fact, for $1/H > 16$, it did not converge within 1000 iterations. However, when the slab entities are included, the AM coarse space is robust with respect to the contrast. This is because the modified RBC problem solved on the edges is capable of resolving the jumps on the edge while the usual AM coarse space cannot.

\begin{figure}[t]
    \centering
    \begin{subfigure}[b]{0.495\textwidth}
        \includegraphics[width=\textwidth]{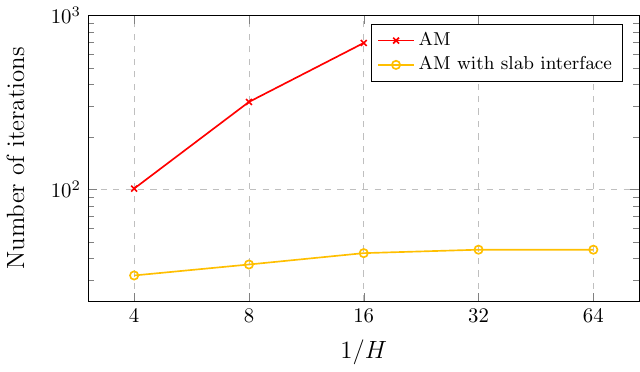}
        \caption{}
        \label{fig:diffusion_u_shaped_inclusions_weak_scalability_iter}
    \end{subfigure}
    \hfil
    \begin{subfigure}[b]{0.495\textwidth}
        \includegraphics[width=\textwidth]{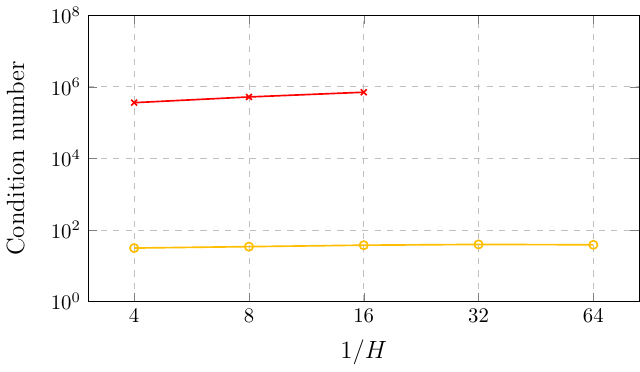}
        \caption{}
        \label{fig:diffusion_u_shaped_inclusions_weak_scalability_cond_num}
    \end{subfigure}
    \caption{Comparison of the numerical scalability of the AM coarse space with and without the slab interface entities for the coefficient distribution in \cref{fig:u_shaped_inclusions} applied to the scalar diffusion problem. Results for $H/h = 16$ and $\delta = 2h$. For $1/H > 16$, AM without the slab entities did not converge within 1000 iterations. \textbf{Left:} number of iterations; \textbf{right:} condition number estimate.}
    \label{fig:diffusion_u_shaped_inclusions_weak_scalability}
\end{figure}

\subsection{Diffusion: 3D log-normal distribution} \label{sec:results_diffusion_geostat}

\begin{figure}[t]
    \centering
    \begin{subfigure}[b]{0.495\textwidth}
        \includegraphics[width=\textwidth]{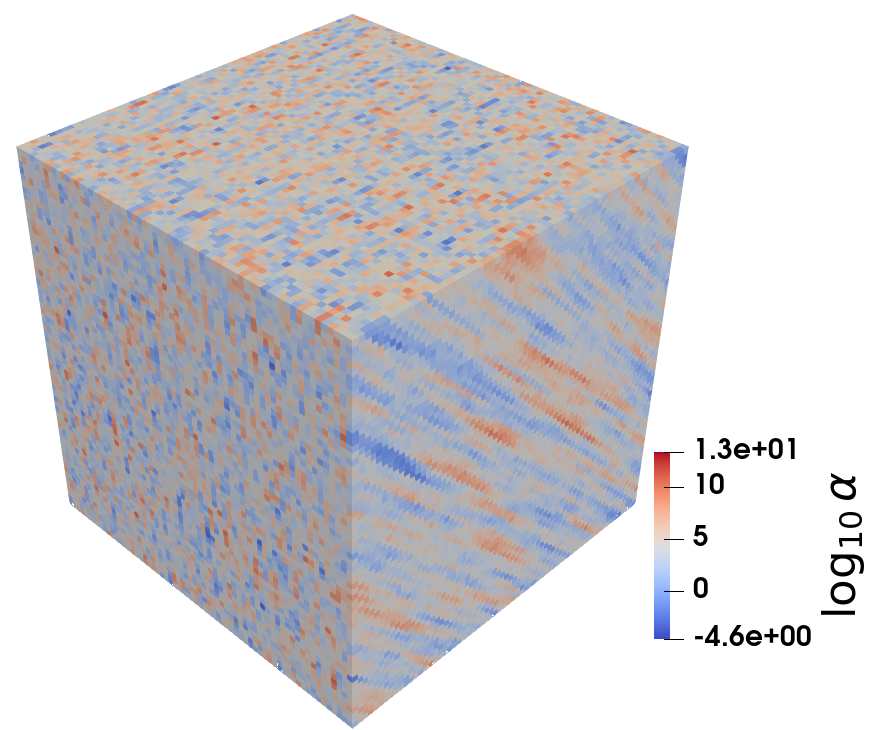}
        \caption{}
        \label{fig:geostat_coeff_layered}
    \end{subfigure}
    \hfill
    \begin{subfigure}[b]{0.495\textwidth}
        \includegraphics[width=\textwidth]{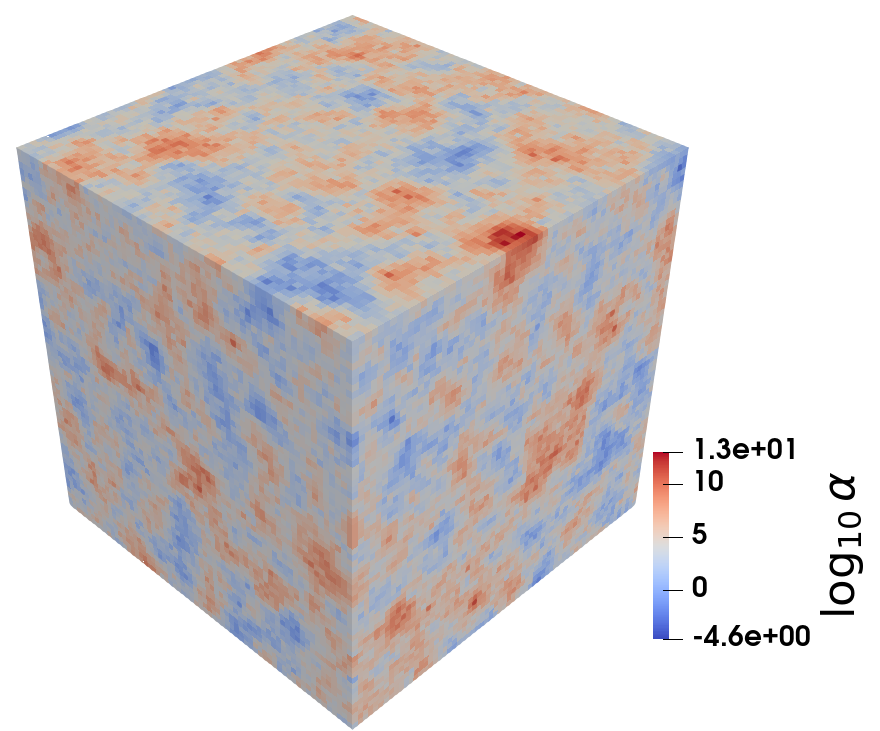}
        \caption{}
        \label{fig:geostat_coeff_patchy}
    \end{subfigure}
    \caption{Realizations of the log-normal coefficient distribution used for the numerical examples in subsections \labelcref{sec:results_diffusion_geostat} and \labelcref{sec:ams_comparison}. \textbf{Left:} coefficient distributed in rotated layers; \textbf{right:} coefficient distributed in heterogeneous patches.}
    \label{fig:geostat_coeff}
\end{figure}

Next, we investigate a 3D problem based on a set of 20 equiprobable realizations of a log-normal coefficient distribution with spherical variograms. Each realization was generated using sequential Gaussian simulation with mean and variance of $\ln{\alpha}$ equal to $-1$ and 4, respectively. An example is illustrated in \cref{fig:geostat_coeff_layered}. The contrast $\alpha_{\text{max}} / \alpha_{\text{min}}$ is of the order of $10^{17}$. Moreover, the coefficient is structured in rotated layers, making the heterogeneities cross the subdomains. As in the previous example, we adopt a constant forcing term $f = 1$.

Although the bound on the coarse space robustness indicator does not apply in this case, the upper bound on the condition number in \cref{eq:cond_num_bound_msfem} is still valid. The numerical results in \cref{tab:diffusion_lognormal_parametric_study} corroborate the theory on the two-level OAS preconditioner. The average iteration count tends to deteriorate for larger $H$ and fixed $\delta$, and improve for larger $\delta$ and fixed $H$. Moreover, the performance of the AM coarse space is comparable to GDSW, with both performing better than RGDSW. This can be more clearly seen in \cref{fig:diffusion_lognormal_parametric_study_iter}. The advantage of using AM instead of GDSW in this scenario is that the multiscale coarse space is considerably smaller, since only vertex-centered coarse basis functions are used. As shown in \cref{tab:diffusion_lognormal_coarse_space_size}, the GDSW coarse space is up to 10 times larger than AM.

\begin{table}[t]
  \caption{Average number of iterations and condition number over 20 equiprobable realizations of a log-normal coefficient distribution for each algebraic coarse space with $h = 1/64$.}
  \label{tab:diffusion_lognormal_parametric_study}

  \centering
  \footnotesize
  \setlength\tabcolsep{0pt}

  \begin{subtable}{0.75\linewidth}
    \begin{tabular*}{\textwidth}{@{\extracolsep{\fill}}*{7}{c} }
      & \multicolumn{3}{c}{Iterations} & \multicolumn{3}{c}{Condition number} \\ 
      \cmidrule{2-4} \cmidrule{5-7}
      \diagbox{$\delta$}{$H$} & $4h$ & $8h$ & $16h$ & $4h$ & $8h$ & $16h$ \\
      \midrule
      $1h$ & 142 & 258 & 350 & $9.42 \cdot 10^2$ & $2.81 \cdot 10^3$ & $7.78 \cdot 10^3$ \\
      $2h$ & 110 & 187 & 243 & $5.23 \cdot 10^2$ & $1.24 \cdot 10^3$ & $3.85 \cdot 10^3$ \\
      $4h$ & 112 & 127 & 130 & $6.27 \cdot 10^2$ & $7.54 \cdot 10^2$ & $1.35 \cdot 10^3$ \\
    \end{tabular*}
    \caption{AM} \label{tab:diffusion_lognormal_parametric_study_ams}
  \end{subtable}
  \hfill
  \begin{subtable}{0.75\linewidth}
    \begin{tabular*}{\textwidth}{@{\extracolsep{\fill}}*{7}{c} }
      & \multicolumn{3}{c}{Iterations} & \multicolumn{3}{c}{Condition number} \\ 
      \cmidrule{2-4} \cmidrule{5-7}
      \diagbox{$\delta$}{$H$} & $4h$ & $8h$ & $16h$ & $4h$ & $8h$ & $16h$ \\
      \midrule
      $1h$ & 371 & 476 & 425 & $6.73 \cdot 10^3$ & $1.20 \cdot 10^4$ & $1.85 \cdot 10^4$ \\
      $2h$ & 292 & 338 & 285 & $4.02 \cdot 10^3$ & $5.23 \cdot 10^3$ & $8.36 \cdot 10^3$ \\
      $4h$ & 291 & 203 & 148 & $3.24 \cdot 10^3$ & $2.32 \cdot 10^3$ & $2.13 \cdot 10^3$ \\
    \end{tabular*}
    \caption{RGDSW} \label{tab:diffusion_lognormal_parametric_study_rgdsw}
  \end{subtable}
  \hfill
  \begin{subtable}{0.75\linewidth}
    \begin{tabular*}{\textwidth}{@{\extracolsep{\fill}}*{7}{c} }
      & \multicolumn{3}{c}{Iterations} & \multicolumn{3}{c}{Condition number} \\ 
      \cmidrule{2-4} \cmidrule{5-7}
      \diagbox{$\delta$}{$H$} & $4h$ & $8h$ & $16h$ & $4h$ & $8h$ & $16h$ \\
      \midrule
      $1h$ & 185 & 296 & 339 & $1.97 \cdot 10^3$ & $5.48 \cdot 10^3$ & $5.86 \cdot 10^3$ \\
      $2h$ & 149 & 206 & 237 & $1.19 \cdot 10^3$ & $2.59 \cdot 10^3$ & $3.19 \cdot 10^3$ \\
      $4h$ & 132 & 145 & 135 & $9.80 \cdot 10^2$ & $1.07 \cdot 10^3$ & $1.15 \cdot 10^3$ \\
    \end{tabular*}
    \caption{GDSW} \label{tab:diffusion_lognormal_parametric_study_gdsw}
  \end{subtable}
\end{table}

\begin{table}[t]
  \caption{Comparison of coarse space sizes for different subdomain sizes $H$.}
  \label{tab:diffusion_lognormal_coarse_space_size}

  \centering
  \footnotesize
  \setlength\tabcolsep{0pt}

  \begin{tabular*}{0.5\textwidth}{@{\extracolsep{\fill}}
    >{\RaggedRight}p{2cm} 
    *{3}{c} }
    & \multicolumn{3}{c}{$H$} \\
    \cmidrule{2-4}
    Coarse space & $4h$ & $8h$ & $16h$ \\
    \midrule
    AM & 3375 & 343 & 27 \\
    GDSW & 25695 & 2863 & 279 \\
  \end{tabular*}
\end{table}

\begin{figure}[t]
    \centering
    \includegraphics[height=0.425\textwidth]{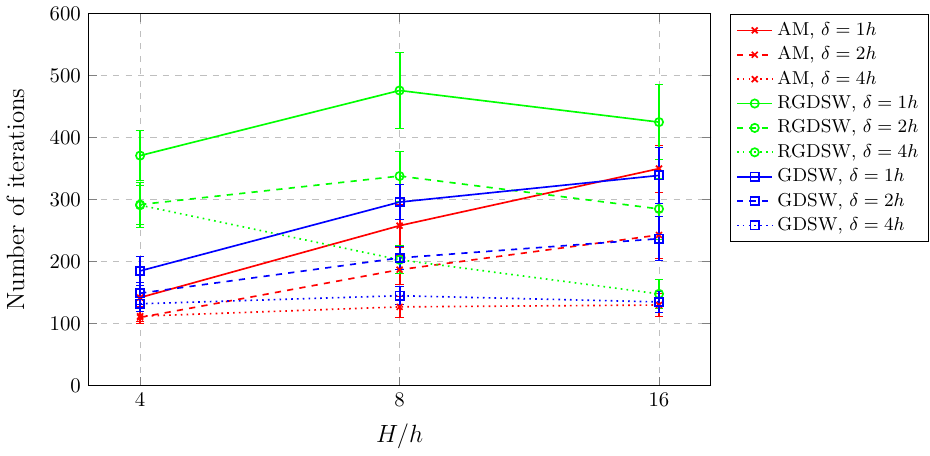}
    \caption{Number of iterations and condition number over 20 equiprobable realizations of a log-normal coefficient distribution for each algebraic coarse space with $h = 1/64$. The plot lines indicate the average and the error bars the standard deviation.}
    \label{fig:diffusion_lognormal_parametric_study_iter}
\end{figure}

\subsection{Linear elasticity: 2D homogeneous problem} \label{sec:results_le_homogeneous}

We now turn our attention to a homogeneous linear elasticity problem as an initial examination of the application of the AM coarse space in this context. The Young modulus is kept constant at $E = 1$, while different values of the Poisson ratio are employed. The body force term is set to $\mathbf{f} = \mathbf{0}$ and the inhomogeneous Dirichlet boundary condition $\mathbf{u}_{\partial \Omega} = (-y,\, x)^{\top}$ is set on the whole domain boundary. The latter corresponds to the linearized rotation rigid body mode.

In \cref{fig:le_homogeneous_weak_scalability_iterations,fig:le_homogeneous_weak_scalability_cond_num}, a weak scalability study is presented for $H/h = 8$, two layers of algebraic overlap and Poisson ratio values increasingly closer to the incompressibility limit $\nu = 0.5$. The problem was solved for 50 different random initial guesses to take into account any possible effects of the initial guess on convergence. The performance of the AM coarse space is comparable to GDSW and RGDSW, scaling well with the number of subdomains for a fixed local subproblem size. We have also considered the case where the rotation modes are omitted from the null space. This allows the coarse basis functions to be computed in a fully algebraic manner. In this setup, the coarse space is still scalable, with an iteration count closer to RGDSW. As $\nu \to 0.5$, the number of iterations and the condition number tend to increase, but all approaches are still scalable.

Next, in \cref{tab:linear_elasticity_homogeneous_parametric_study}, we investigate the behavior of the two-level OAS preconditioner with the AM coarse space for different combinations of the parameters $H$ and $\delta$. For this discussion, $\nu = 0.3$ and the problem is solved with a zero initial guess. From the theory on Schwarz methods \cite{TOSELLI2005DDBOOK}, the upper bound on the condition number of the two-level OAS operator is of the form
\begin{equation} \label{eq:two_level_oas_cond_num_bound_le}
  \kappa \left( M_{\text{OAS-2}}^{-1} A \right) \leq C \left( 1 + \frac{H}{\delta} \right),
\end{equation}
where $C$ is a constant independent of $H$, $h$ and $\delta$. Although we did not develop a specific bound for the AM coarse space, we may still verify that the dependence on $H$ and $\delta$ is the same.

If the entire null space is used, as seen in \cref{tab:linear_elasticity_homogeneous_parametric_study_ams}, the number of iterations and the condition number tend to worsen as $H$ increases and improve for larger $\delta$. Additionally, for fixed $H / \delta$, the iteration count and the condition number remain approximately constant, as expected from \cref{eq:two_level_oas_cond_num_bound_le}. The same was observed when the rotation modes are neglected. For the problem considered in this section, the translation modes appear to be sufficient to provide a robust and scalable preconditioner while allowing for a fully algebraic construction of the coarse basis functions.

\begin{figure}[t]
  \centering
  \begin{subfigure}[b]{0.495\textwidth}
    \centering
    \includegraphics[width=\textwidth]{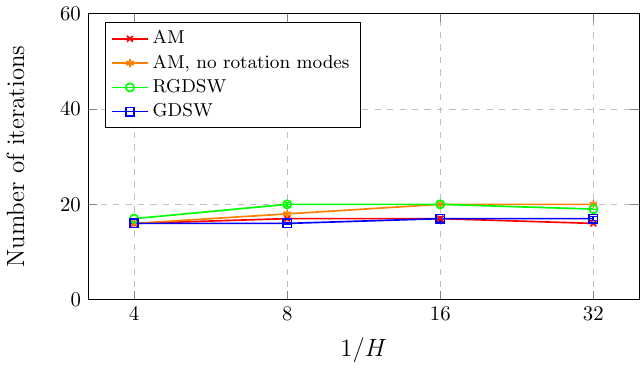}
    \caption{$\nu = 0.3$}
    \label{fig:le_homogeneous_nu_03_weak_scalability_iterations}
  \end{subfigure}
  \hfill
  \begin{subfigure}[b]{0.495\textwidth}
    \centering
    \includegraphics[width=\textwidth]{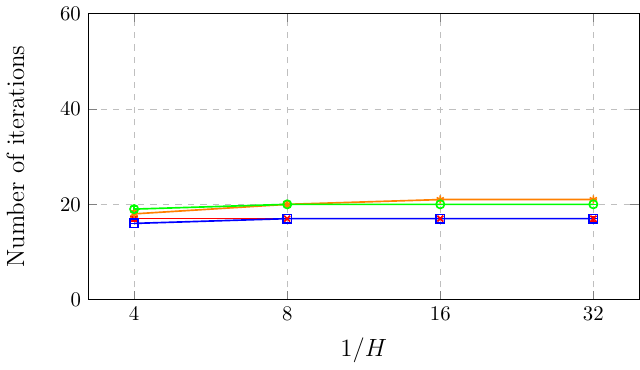}
    \caption{$\nu = 0.4$}
    \label{fig:le_homogeneous_nu_04_weak_scalability_iterations}
  \end{subfigure}
  \hfill
  \begin{subfigure}[b]{0.495\textwidth}
    \centering
    \includegraphics[width=\textwidth]{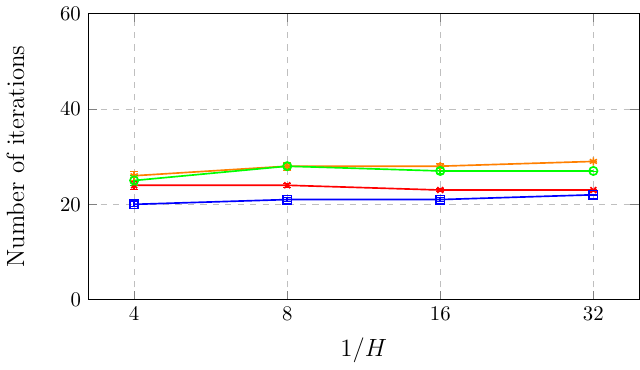}
    \caption{$\nu = 0.49$}
    \label{fig:le_homogeneous_nu_049_weak_scalability_iterations}
  \end{subfigure}
  \hfill
  \begin{subfigure}[b]{0.495\textwidth}
    \centering
    \includegraphics[width=\textwidth]{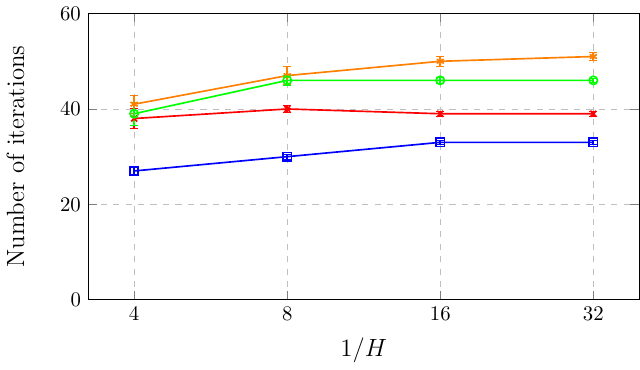}
    \caption{$\nu = 0.499$}
    \label{fig:le_homogeneous_nu_0499_weak_scalability_iterations}
  \end{subfigure}
  \caption{Average iteration count over 50 random initial guesses for the homogeneous linear elasticity discussed in \cref{sec:results_le_homogeneous}. Plots for different values of the Poisson ratio ($\nu$), $H/h = 8$, $\delta = 2h$.}
  \label{fig:le_homogeneous_weak_scalability_iterations}
\end{figure}

\begin{figure}[t]
  \centering
  \begin{subfigure}[b]{0.495\textwidth}
    \centering
    \includegraphics[width=\textwidth]{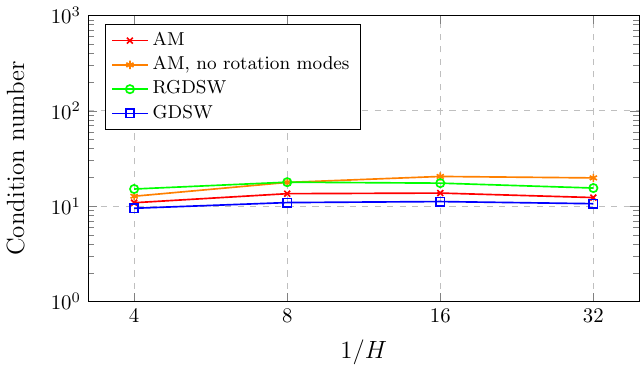}
    \caption{$\nu = 0.3$}
    \label{fig:le_homogeneous_nu_03_weak_scalability_cond_num}
  \end{subfigure}
  \hfill
  \begin{subfigure}[b]{0.495\textwidth}
    \centering
    \includegraphics[width=\textwidth]{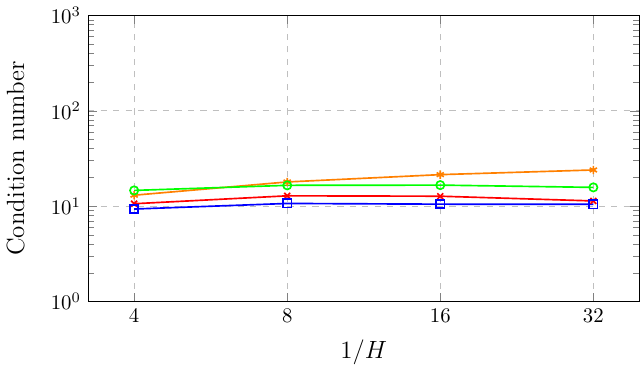}
    \caption{$\nu = 0.4$}
    \label{fig:le_homogeneous_nu_04_weak_scalability_cond_num}
  \end{subfigure}
  \hfill
  \begin{subfigure}[b]{0.495\textwidth}
    \centering
    \includegraphics[width=\textwidth]{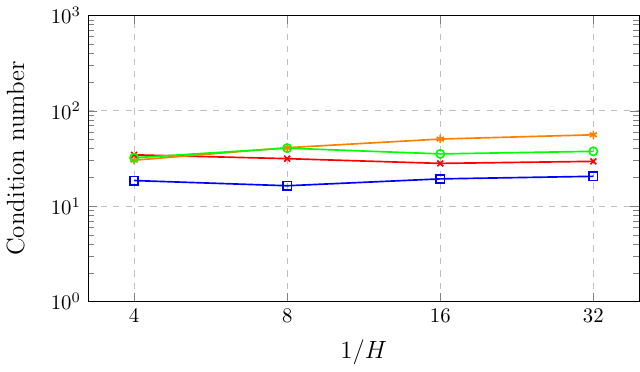}
    \caption{$\nu = 0.49$}
    \label{fig:le_homogeneous_nu_049_weak_scalability_cond_num}
  \end{subfigure}
  \hfill
  \begin{subfigure}[b]{0.495\textwidth}
    \centering
    \includegraphics[width=\textwidth]{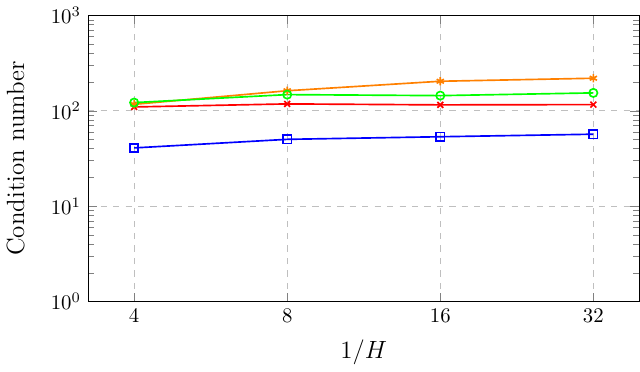}
    \caption{$\nu = 0.499$}
    \label{fig:le_homogeneous_nu_0499_weak_scalability_cond_num}
  \end{subfigure}
  \caption{Condition number estimate for the homogeneous linear elasticity discussed in \cref{sec:results_le_homogeneous}. Plots for different values of the Poisson ratio ($\nu$), $H/h = 8$, $\delta = 2h$.}
  \label{fig:le_homogeneous_weak_scalability_cond_num}
\end{figure}

\begin{table}[t]
  \caption{Number of iterations and condition number estimate for the AM coarse space with (left) and without (right) the rotation modes for different combinations of overlap ($\delta$) and subdomain ($H$) sizes. Boldface values correspond to the case where $H / \delta = 8$. For the examples here, $h = 1/128$.}
  \label{tab:linear_elasticity_homogeneous_parametric_study}

  \centering
  \footnotesize
  \setlength\tabcolsep{0pt}

  \begin{subtable}[b]{0.475\linewidth}
    \begin{tabular*}{\textwidth}{@{\extracolsep{\fill}}*{7}{c} }
      & \multicolumn{3}{c}{Iterations} & \multicolumn{3}{c}{Condition number} \\ 
      \cmidrule{2-4} \cmidrule{5-7}
      \diagbox{$\delta$}{$H$} & $8h$ & $16h$ & $32h$ & $8h$ & $16h$ & $32h$ \\
      \midrule
      $1h$ & \textbf{18} & 21 & 25 & \textbf{12.73} & 22.02 & 34.01 \\
      $2h$ & 18 & \textbf{19} & 22 & 12.80 & \textbf{17.40} & 27.60 \\
      $4h$ & 18 & 18 & \textbf{20} & 12.62 & 13.15 & \textbf{22.05} \\
    \end{tabular*}  
    \caption{AM} \label{tab:linear_elasticity_homogeneous_parametric_study_ams}
  \end{subtable}
  \hfill
  \begin{subtable}[b]{0.475\linewidth}
    \begin{tabular*}{\textwidth}{@{\extracolsep{\fill}}*{7}{c} }
      & \multicolumn{3}{c}{Iterations} & \multicolumn{3}{c}{Condition number} \\ 
      \cmidrule{2-4} \cmidrule{5-7}
      \diagbox{$\delta$}{$H$} & $8h$ & $16h$ & $32h$ & $8h$ & $16h$ & $32h$ \\
      \midrule
      $1h$ & \textbf{25} & 28 & 31 & \textbf{25.83} & 37.04 & 55.15 \\
      $2h$ & 22 & \textbf{25} & 27 & 20.28 & \textbf{27.72} & 39.25 \\
      $4h$ & 21 & 22 & \textbf{21} & 19.26 & 19.99 & \textbf{25.60} \\
    \end{tabular*}
    \caption{AM without rotation modes} \label{tab:linear_elasticity_homogeneous_parametric_study_ams_no_rotations}
  \end{subtable}
\end{table}

\subsection{Linear elasticity: 2D high coefficient inclusions} \label{sec:results_le_high_coeff}

In this subsection, we consider a heterogeneous problem similar to the one discussed in \cref{sec:results_diffusion_high_coeff}. Here, we let the Young modulus be defined as in \cref{fig:vertex_inclusions}, with the Poisson ratio $\nu = 0.3$ and a constant body force term $\mathbf{f} = \mathbf{1}$.

The results on the weak scalability of the AM coarse space are shown in \cref{fig:linear_elasticity_high_coeff_inclusions_weak_scalability} for $H/h = 8$ and $\delta = 2h$. When the entire null space is used for the coarse basis functions, the AM approach is robust with respect to the high coefficient contrast. In this case, the condition number is independent of the contrast. When the rotation modes are omitted, the number of iterations does not increase as sharply as for GDSW and RGDSW, although the condition number is of the same order of magnitude as the contrast.

The spectra of eigenvalues for each coarse space option are presented in \cref{fig:linear_elasticity_high_coeff_inclusions_eigval_spectrum} for $1/H = 16$ and $1/H = 64$. As expected from the condition number computed in \cref{fig:linear_elasticity_high_coeff_inclusions_weak_scalability_cond_num}, the eigenvalues of the AM coarse space with the rotation modes are clustered between $10^{-1}$ and 10, whereas all the other coarse spaces have clusters of unresolved small eigenvalues. However, unlike GDSW and RGDSW, for the AM coarse space without the rotation modes, the cluster of small eigenvalues does not spread across the spectrum as more subdomains are used.

Finally, we have also investigated the impact of the contrast on the conditioning of the system. The results for $1/H = 8$, $1/h = 64$ and $\delta = 2h$ are shown in \cref{tab:linear_elasticity_high_coeff_contrast_study}. The AM approach outperforms GDSW and RGDSW in terms of the number of iterations, but the method becomes robust with respect to the contrast only when the rotation modes are included in the coarse space. Otherwise, the condition number is always on the order of magnitude of the contrast.

\begin{figure}[t]
    \centering
    \begin{subfigure}[b]{0.495\textwidth}
        \includegraphics[width=\textwidth]{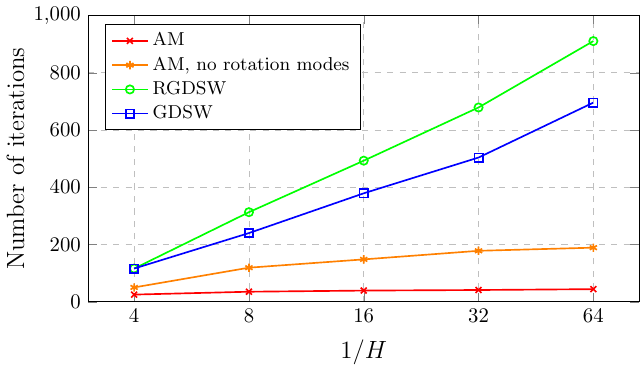}
        \caption{}
        \label{fig:linear_elasticity_high_coeff_inclusions_weak_scalability_iter}
    \end{subfigure}
    \hfil
    \begin{subfigure}[b]{0.495\textwidth}
        \includegraphics[width=\textwidth]{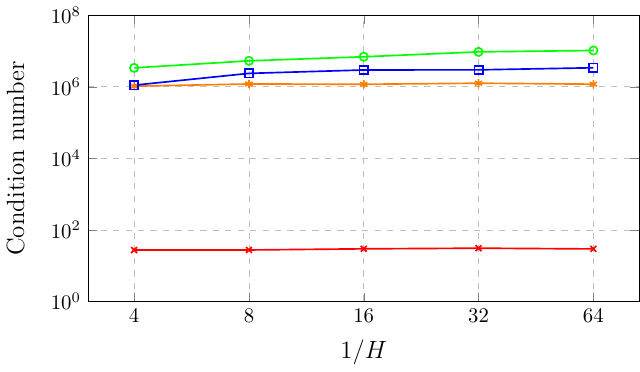}
        \caption{}
        \label{fig:linear_elasticity_high_coeff_inclusions_weak_scalability_cond_num}
    \end{subfigure}
    \caption{Numerical scalability of the AM coarse space compared to the GDSW and RGDSW methods for the coefficient distribution in \cref{fig:vertex_inclusions} applied to the linear elasticity problem. Results for $H/h = 8$ and $\delta = 2h$. \textbf{Left:} number of iterations; \textbf{right:} condition number estimate.}
    \label{fig:linear_elasticity_high_coeff_inclusions_weak_scalability}
\end{figure}

\begin{figure}[t]
  \centering
  \begin{subfigure}[b]{0.495\textwidth}
    \includegraphics[width=\textwidth]{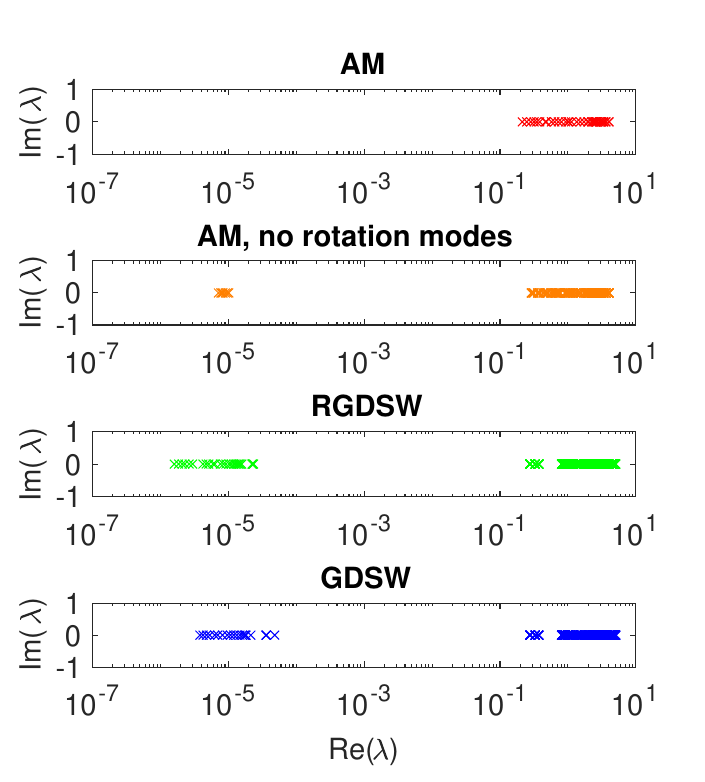}
    \caption{$1/H = 16$}
    \label{fig:linear_elasticity_high_coeff_inclusions_eigval_spectrum_16x16}
  \end{subfigure}
  \hfil
  \begin{subfigure}[b]{0.495\textwidth}
    \includegraphics[width=\textwidth]{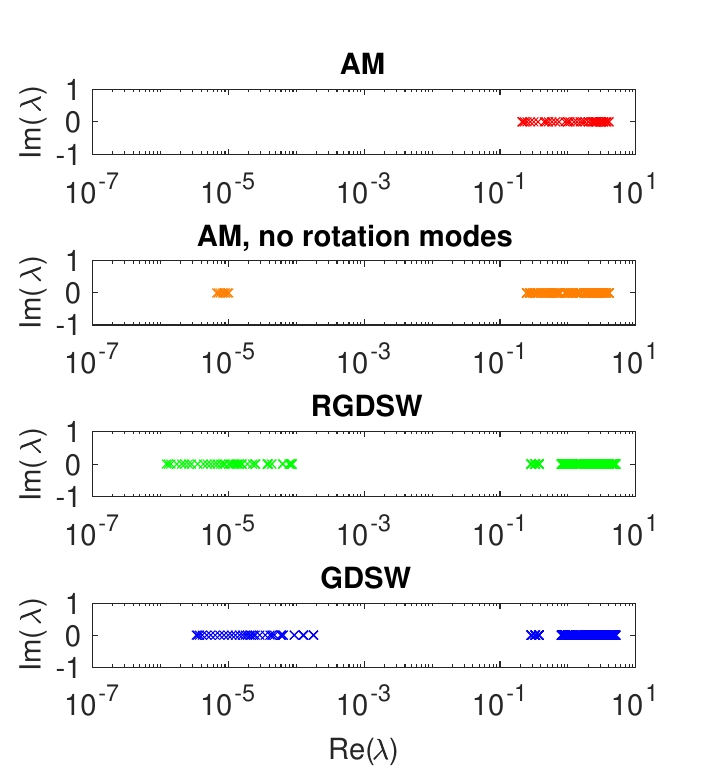}
    \caption{$1/H = 64$}
    \label{fig:linear_elasticity_high_coeff_inclusions_eigval_spectrum_64x64}
  \end{subfigure}
  \caption{Spectra of eigenvalues of the Schwarz operator with each algebraic coarse space option for the coefficient distribution in \cref{fig:vertex_inclusions} applied to the linear elasticity problem. The eigenvalues are computed from the coefficients of the Lanczos algorithm associated with the CG iterations. \textbf{Left:} $1/H = 16$; \textbf{right:} $1/H = 64$.}
  \label{fig:linear_elasticity_high_coeff_inclusions_eigval_spectrum}
\end{figure}

\begin{table}[t]
  \caption{Number of iterations (it.) and condition number estimate ($\kappa$) for each algebraic coarse space option and different Young modulus contrasts. Here, $1/H = 8$, $1/h = 64$, and $\delta = 2h$.}
  \label{tab:linear_elasticity_high_coeff_contrast_study}

  \footnotesize
  \setlength\tabcolsep{0pt}

  \begin{tabular*}{\textwidth}{@{\extracolsep{\fill}}
    >{\RaggedRight}p{2.75cm} 
    *{8}{c} }
    & \multicolumn{2}{c}{$E_{\text{max}} = 1$} & \multicolumn{2}{c}{$E_{\text{max}} = 10^2$} & \multicolumn{2}{c}{$E_{\text{max}} = 10^4$} & \multicolumn{2}{c}{$E_{\text{max}} = 10^6$} \\ 
    \cmidrule{2-3} \cmidrule{4-5} \cmidrule{6-7} \cmidrule{8-9}
    Coarse space & it. & $\kappa$ & it. & $\kappa$ & it. & $\kappa$ & it. & $\kappa$ \\
    \midrule
    AM & 21 & 11.18 & 30 & 25.80 & 33 & 33.42 & 35 & 27.54 \\
    AM, no rotations & 24 & 15.11 & 46 & 104.02 & 82 & $1.11 \cdot 10^4$ & 119 & $1.21 \cdot 10^6$ \\
    RGDSW & 25 & 16.01 & 85 & 456.68 & 201 & $5.30 \cdot 10^4$ & 313 & $5.34 \cdot 10^6$ \\
    GDSW & 21 & 10.51 & 62 & 182.61 & 152 & $1.87 \cdot 10^4$ & 240 & $2.38 \cdot 10^6$ \\
  \end{tabular*}
\end{table}

\subsection{Linear elasticity: 3D unstructured problem} \label{sec:results_le_unstructured}

\begin{figure}[t]
  \centering
  \includegraphics[width=0.5\textwidth]{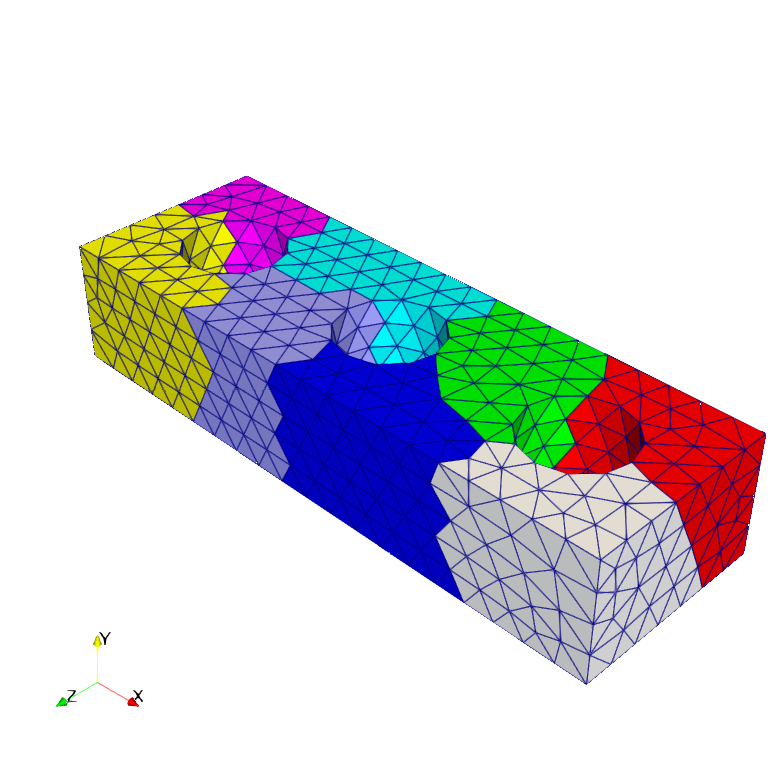}
  \caption{Illustration of the domain used in \cref{sec:results_le_unstructured} and an example of unstructured domain decomposition.}
  \label{fig:linear_elasticity_3d_unstructured_dd}
\end{figure}

In this example, we discuss the performance of the AM coarse space for a 3D unstructured linear elasticity problem. The domain is the rectangular box $[0, 3] \times [0, 0.6] \times [0, 1]$ with three cylindrical holes, as shown in \cref{fig:linear_elasticity_3d_unstructured_dd}. The Young modulus is set to $E = 1$ and the Poisson ratio to $\nu = 0.3$. The body force term is set to $\mathbf{f} = \mathbf{0}$, and the prescribed displacement on the boundary is given by
\begin{equation*}
  \mathbf{u}_{\partial \Omega} =
  \begin{cases}
    (0, 4 x (3 - x) / 9, 0)^{\top}, & \text{on } \partial\Omega_{\text{top}} = \{ (x, y, z) \, | \, y = 0.6 \}, \\
    (0, 0, 0)^{\top}, & \text{elsewhere}.
  \end{cases}
\end{equation*}
The domain was partitioned with METIS \cite{KARYPIS1998METIS} version $5.1.0$ using only the geometric information of the mesh, which does not ensure that the interface of the non-overlapping domain decomposition is algebraically connected.

The weak scalability of the AM coarse space compared to GDSW and RGDSW is shown in \cref{fig:linear_elasticity_3d_unstructured_weak_scalability}. We employed the slab interface extended by one layer of nodes when computing the AM coarse basis functions. Without this correction, the AM edge and face problems are singular, since the interface is not algebraically connected. The results show that the numerical performance of the AM coarse space is comparable to GDSW and RGDSW as in the 2D homogeneous case discussed in \cref{sec:results_le_homogeneous}. Employing the slab entities does not deteriorate the convergence of the two-level preconditioner with the AM coarse space. Moreover, for more than 512 subdomains, we ran out of memory while computing a set of linearly independent GDSW coarse basis functions.

\begin{figure}[t]
    \centering
    \begin{subfigure}[b]{0.495\textwidth}
        \includegraphics[width=\textwidth]{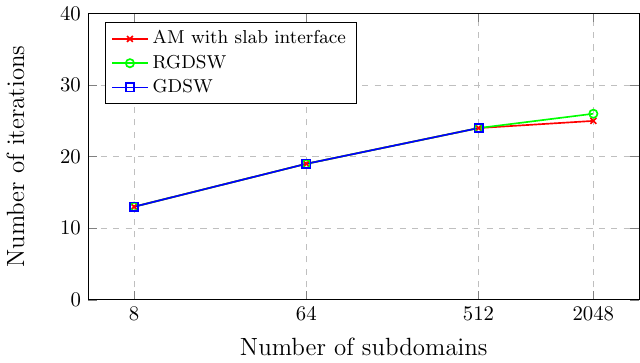}
        \caption{}
        \label{fig:linear_elasticity_3d_unstructured_weak_scalability_iter}
    \end{subfigure}
    \hfil
    \begin{subfigure}[b]{0.495\textwidth}
        \includegraphics[width=\textwidth]{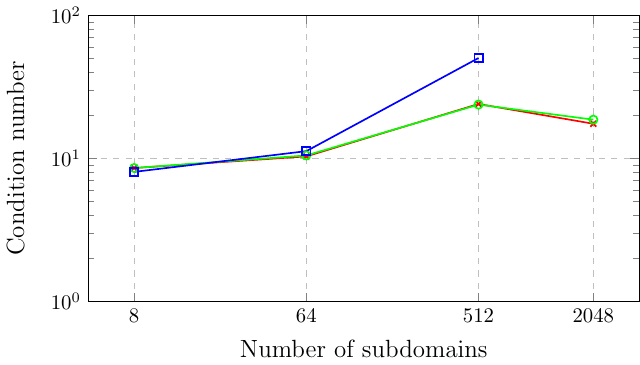}
        \caption{}
        \label{fig:linear_elasticity_3d_unstructured_weak_scalability_cond_num}
    \end{subfigure}
    \caption{Numerical scalability of the AM coarse space compared to the GDSW and RGDSW methods for the unstructured problem discussed in \cref{sec:results_le_unstructured}. Results for $\delta = 2h$. \textbf{Left:} number of iterations; \textbf{right:} condition number estimate.}
    \label{fig:linear_elasticity_3d_unstructured_weak_scalability}
\end{figure}

\subsection{Comparison with AMS} \label{sec:ams_comparison}

In the final numerical experiment, we compare the iterative performance of OAS-2 with the AM coarse space to the AMS preconditioner proposed in \cite{ZHOUTCHELEPI2012TAMS}. AMS is a two-stage method that comprises a local stage aimed at resolving high-frequency error modes, and a global stage given by the solution of the coarse problem defined by the AM coarse basis functions. Convergence can be further improved by successively applying the local stage as a smoother. The main steps of the application of the preconditioner $y^{(k)} = M_{\text{AMS}}^{-1} x^{(k)}$ are summarized in \cref{alg:ams_precond}. It can be interpreted as a multigrid V-cycle with a single coarse level. Based on the computational study conducted in \cite{WANG2014AMS}, we have chosen ILU(0) as the local stage, i.e., $M_{\text{local}}^{-1} = M_{ILU(0)}^{-1}$.

\begin{algorithm}[t]
\caption{AMS preconditioner}
\label{alg:ams_precond}
\begin{algorithmic}
  \STATE{$y^{(k),\, 1} \gets \mathbf{0}$}
  \FOR{$\nu = 1, \dots, n_s$}
    \STATE{$y^{(k),\, \nu + 1} \gets y^{(k),\, \nu} + M^{-1}_{\text{local}} \left( I - A y^{(k)} \right)$}
  \ENDFOR
  \STATE{$r^{(k),\, \nu} \gets x^{(k)} - A y^{(k),\, \nu}$}
  \STATE{$r^{(k),\, \nu}_{c} \gets \Phi^{\top} r^{(k),\, \nu}$}
  \STATE{$\delta^{(k),\, \nu}_{c} \gets A_0^{-1} r^{(k),\, \nu}_{c}$}
  \STATE{$y^{(k)} \gets y^{(k),\, \nu} + \Phi \delta^{(k),\, \nu}_{c}$}
\end{algorithmic}
\end{algorithm}

We consider a heterogeneous scalar diffusion problem on a unit cube with a zero forcing term. The coefficient function is based on 20 realizations of a log-normal distribution. An example is depicted in \cref{fig:geostat_coeff_patchy}. As boundary conditions, we have set $u = 0$ at $x = 0$, $u = 1$ at $x = 1$, and $\nabla u \cdot \mathbf{n} = 0$ elsewhere, where $\mathbf{n}$ is the outward unit normal vector to the boundary. The resulting system of equations is solved by GMRES with a relative stopping criterion $||M^{-1} r^{(k)}||_2 / ||M^{-1} r^{(0)}||_2 < 10^{-5}$, where $M^{-1} r^{(k)}$ is the $k$-th preconditioned residual vector. Moreover, the GMRES iterations are restarted after 50 inner iterations, and, for OAS-2, $\delta = 2h$ and $H/h = 8$.

The iteration counts of OAS-2 with the AM coarse space and of AMS with and without smoothing are shown in \cref{tab:ams_comparison}. On average, the two-level method outperforms AMS without smoothing and is comparable to AMS when additional smoothing is employed. However, OAS-2 with AM is less sensitive to the changes in the coefficient function, as can be observed from the standard deviation for different numbers of subdomains. In \cref{fig:ams_comparison_iter_history}, the iteration histories are shown for a selected realization of the log-normal distribution.

For problems where the two methods perform similarly in terms of the number of iterations, the main advantage of the OAS-2 preconditioner is that the local subdomain problems can be solved in parallel, whereas ILU(0) is inherently sequential. Additionally, the computation of the ILU(0) decomposition scales linearly with the number of non-zero entries in $A$, whereas the factorization of the submatrices $A_i$ depends on the number of dofs in the overlapping subdomains. Naturally, other options for the local stage of \cref{alg:ams_precond} that are better suited for parallel computations can be explored.

The application of the AMS preconditioner using ILU(0) as the local stage can be performed in $O(n + n_V^2)$ operations, where $n$ is the number of nodes in the grid. This cost comprises the application of the smoother and the solution of the coarse problem. As discussed in \cref{sec:comp_analysis}, the computational cost of an application of the two-level OAS preconditioner is $O(Nn_{\Omega_i}^2 + n_{\text{V}}^2)$. Compared to AMS, the only difference lies in the cost of solving the local stage. For OAS, the factor $O(Nn_{\Omega_i}^2)$ is effectively reduced to $O(n_{\Omega_i}^2)$ in a parallel implementation. It might be further reduced to $O(n_{\Omega_i})$ if inexact local subdomain solvers are employed. Furthermore, it does not directly depend on $n$ and, if $n_{\Omega_i}$ is kept constant, the asymptotic computational cost is not affected as the mesh is refined.

\begin{table}[t]
  \caption{Comparison of the iterative performance of AMS against the OAS-2 preconditioner with the AM coarse space. The average, standard deviation, maximum and minimum number of inner GMRES iterations are taken over 20 equiprobable realizations of the log-normal coefficient distribution.}
  \label{tab:ams_comparison}

  \centering
  \footnotesize

  \begin{subtable}{0.75\textwidth}
    \begin{tabular*}{\textwidth}{@{\extracolsep{\fill}}
      >{\RaggedRight}p{2cm} 
      *{5}{l} }
      $1/H$ & Avg. iterations & Std. dev. & Max. & Min.  \\
      \midrule
      4 & 46 & 4.69 & 52 & 37 \\
      8 & 50 & 7.07 & 71 & 43 \\
      16 & 39 & 5.55 & 51 & 30 \\
    \end{tabular*}
    \caption{OAS-2 + AM} \label{tab:ams_comparison_oas2_am}
  \end{subtable}
  \hfill
  \begin{subtable}{0.75\linewidth}
    \begin{tabular*}{\textwidth}{@{\extracolsep{\fill}}
      >{\RaggedRight}p{2cm} 
      *{5}{l} }
      $1/H$ & Avg. iterations & Std. dev. & Max. & Min.  \\
      \midrule
      4 & 232 & 162.54 & 800 & 71 \\
      8 & 92 & 41.27 & 216 & 50 \\
      16 & 47 & 14.74 & 85 & 29 \\
    \end{tabular*}
    \caption{AMS} \label{tab:ams_comparison_ams}
  \end{subtable}
  \hfill
  \begin{subtable}{0.75\linewidth}
    \begin{tabular*}{\textwidth}{@{\extracolsep{\fill}}
      >{\RaggedRight}p{2cm} 
      *{5}{l} }
      $1/H$ & Avg. iterations & Std. dev. & Max. & Min.  \\
      \midrule
      4 & 89 & 49.23 & 268 & 50 \\
      8 & 42 & 10.45 & 76 & 30 \\
      16 & 21 & 2.13 & 25 & 17 \\
    \end{tabular*}
    \caption{AMS with 5 smoothing steps} \label{tab:ams_comparison_ams_w_smoother}
  \end{subtable}
\end{table}

\begin{figure}[t]
  \centering
  \includegraphics[width=0.75\textwidth]{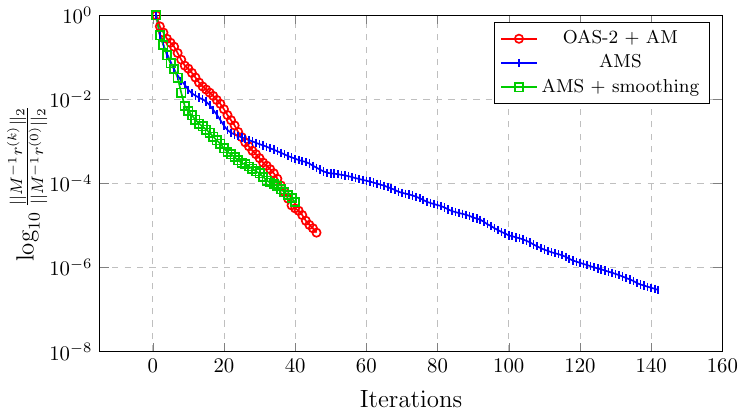}
  \caption{Iteration histories until convergence of OAS-2 with the AM coarse space, AMS, and AMS with smoothing for a realization of the patchy log-normal distribution. The relative $l^2$-norm of the preconditioned residual is shown at each inner GMRES iteration. In this example, $1/H = 8$, $H/h = 8$, and, for OAS-2, $\delta = 2h$.}
  \label{fig:ams_comparison_iter_history}
\end{figure}

\clearpage

\section{Conclusions} \label{sec:conclusions}

In this work, we have introduced an algebraic multiscale (AM) coarse space for the two-level overlapping additive Schwarz method. The coarse basis functions are defined as energy-minimizing extensions of trace values given by the solution of reduced boundary condition problems on the interface of the subdomains. This allows for information about changes in the coefficient function to be incorporated into the functions. In addition, we have also studied the algebraic multiscale formulation as a way to compute an IPOU function that can be used in the RGDSW framework \cite{DOHRMANN2017RGDSW}, thus enabling the coarse space to be extended to linear elasticity problems. Our numerical experiments showed that the AM coarse space outperforms or matches similar algebraic energy-minimizing approaches for the heterogeneous problems studied while using only vertex-centered coarse basis functions. The OAS-2 preconditioner with the AM coarse space is also comparable to the AMS preconditioner in terms of its iterative performance. Furthermore, by computing the interface trace values on a slab entity we enable the use of the AM coarse space for general unstructured domain decompositions while improving its robustness. For linear elasticity, the rotation modes may be omitted while still maintaining good iterative performance. Future work should focus on the parallel implementation of the methods discussed here as part of the Fast and Robust Overlapping Schwarz (FROSch) package \cite{HEINLEIN2020FROSCH}, including CPU performance measurements to fairly compare them with other state-of-the-art preconditioners. Furthermore, in cases where the AM coarse space is not robust with respect to the coefficient, spectral enrichment may be investigated to reinforce robustness by including the problematic eigenmodes into the coarse space.

\bibliographystyle{etna}
\bibliography{bibliography.bib}

\end{document}